\title{Proofs of two conjectures of Kenyon and Wilson on Dyck tilings}
\author{Jang Soo Kim}
\email{kimjs@math.umn.edu}
\keywords{Dyck paths, Dyck tilings}
\date{\today}
\newtheorem{thm}{Theorem}[section]
\newtheorem{lem}[thm]{Lemma}
\newtheorem{prop}[thm]{Proposition}
\newtheorem{cor}[thm]{Corollary}
\theoremstyle{definition}
\newtheorem{defn}{Definition}
\theoremstyle{remark}
\newcommand\D{\mathcal{D}}
\newcommand\TD{\mathcal{TD}}
\newcommand\C{\mathrm{Chord}}
\newcommand\M{\mathcal{M}}
\newcommand\HH{\mathcal{H}}
\newcommand\area{\mathrm{area}}
\newcommand\cro{\mathrm{cr}}
\newcommand\nest{\mathrm{ne}}
\newcommand\Dyck{\mathrm{Dyck}}
\def\sch.{Schr{\"o}der}
\newcommand\HT{\mathrm{ht}}
\newcommand\lm{\lambda/\mu}
\newcommand\qint[1]{\left[ #1\right]_q}
\newcommand\pqint[1]{\left[ #1\right]_{p,q}}
\newcommand\Qbinom[3]{\genfrac{[}{]}{0pt}{}{#1}{#2}_{#3}}
\newcommand\qbinom[2]{\Qbinom{#1}{#2}{q}}
\newcommand{\qhyper}[5]{{_{#1}\phi_{#2}}\left[\genfrac{}{}{0pt}{}{#3}{#4};q,#5\right]}
\def\cell(#1,#2)[#3]{
\ax=#2 \ay=#1
\multiply\ay by-1
\bx=\ax \by=\ay
\cx=\ax \cy=\ay
\dx=\ax \dy=\ay
\advance\bx by-1
\advance\dy by1
\advance\cx by-1
\advance\cy by1
\psline (\dx,\dy)(\ax,\ay)(\bx,\by)
\rput(\number\cx.5,
\ifnum\cy=0 -0.5\else\number\cy.5\fi){#3}
}
\def\dia(#1,#2){
  \rput(#1,#2){\pspolygon(1,0)(0,1)(-1,0)(0,-1)}
  }
\def\dyckgrid#1{
\psgrid(0,0)(#1,#1)
\psline[linewidth=.1pt, linecolor=gray](0,0)(#1,#1)
}
\def\HS(#1,#2){\rput(#1,#2){\psline[linewidth=1pt](0,0)(1,0)}}
\def\HSC(#1,#2){\rput(#1,#2){   
\pscurve [linewidth=1pt] (0,0)(.2,.1)(.4,-.1)(.6,.1)(.8,-.1)(1,0)
}}
\def\UP(#1,#2){\rput(#1,#2){\psline[linewidth=1pt](0,0)(1,1) 
}}
\def\DW(#1,#2){\rput(#1,#2){\psline[linewidth=1pt](0,0)(1,-1) 
}}
\def\MUP(#1,#2){\rput(#1,#2){\psline[linecolor=red, linewidth=2pt](0,0)(1,1)
}}
\def\MDW(#1,#2){\rput(#1,#2){\psline[linecolor=red, linewidth=2pt](0,0)(1,-1)
}}
\def\MHS(#1,#2){\rput(#1,#2){\psline[linecolor=red, linewidth=2pt](0,0)(1,0)}}
\def\MHSC(#1,#2){\rput(#1,#2){   
\pscurve [linecolor=red, linewidth=2pt] (0,0)(.2,.1)(.4,-.1)(.6,.1)(.8,-.1)(1,0)
}}
\def\cvput#1[#2]{\pnode(#1,1){#1} \pscircle*(#1,1){.1} \rput(#1,.5){$#2$}}
\def\vput#1{\cvput#1[#1]}
\def\edge#1#2{\ncarc[arcangle=50]{#1}{#2}}
\begin{document}

\begin{abstract}
  Recently, Kenyon and Wilson introduced a certain matrix $M$ in order to compute
  pairing probabilities of what they call the double-dimer model. They showed
  that the absolute value of each entry of the inverse matrix $M^{-1}$ is equal
  to the number of certain Dyck tilings of a skew shape. They conjectured two
  formulas on the sum of the absolute values of the entries in a row or a column
  of $M^{-1}$.  In this paper we prove the two conjectures. As a consequence we
  obtain that the sum of the absolute values of all entries of $M^{-1}$ is equal
  to the number of complete matchings. We also find a bijection between Dyck
  tilings and complete matchings.
\end{abstract}

\maketitle
% \tableofcontents

\section{Introduction}
\label{sec:introduction}

A \emph{Dyck path} of length $2n$ is a lattice path from $(0,0)$ to $(n,n)$
consisting of \emph{up steps} $(0,1)$ and \emph{down steps} $(1,0)$ that never
goes below the line $y=x$. The set of Dyck paths of length $2n$ is denoted
$\Dyck(2n)$. In this paper we will sometimes identify a Dyck path $\lambda$ with
a partition as shown in Figure~\ref{fig:partition}. For $\lambda,\mu\in
\Dyck(2n)$, if $\mu$ is above $\lambda$, then the skew shape $\lm$ is well
defined.

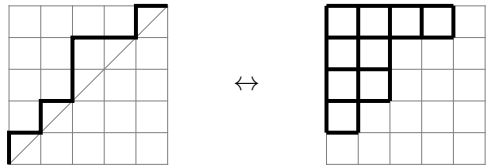
\begin{figure}
  \centering
\begin{pspicture}(0,0)(5,5)
\dyckgrid5
\psline(0,0)(0,1)(1,1)(1,2)(2,2)(2,4)(4,4)(4,5)(5,5)
\end{pspicture}%
\begin{pspicture}(0,0)(5,5)
\rput(2.5,2.5){$\leftrightarrow$}
\end{pspicture}%
\begin{pspicture}(0,0)(5,5)
\psgrid (0,0)(5,5) 
\psline(0,5)(4,5)
\psline(0,4)(4,4)
\psline(0,3)(2,3)
\psline(0,2)(2,2)
\psline(0,1)(1,1)
\psline(0,1)(0,5)
\psline(1,1)(1,5)
\psline(2,2)(2,5)
\psline(3,4)(3,5)
\psline(4,4)(4,5)
\end{pspicture}
\caption{A Dyck path identified with the partition $(4,2,2,1)$.}
  \label{fig:partition}
\end{figure}

For two Dyck paths $\lambda$ and $\mu$, we define $\lambda\succ\mu$ if $\lambda$
can be obtained from $\mu$ by choosing some matching pairs of up steps and down
steps and exchanging the chosen up steps and down steps, see
Figure~\ref{fig:order}. In order to compute pairing probabilities of so-called
the double-dimer model, Kenyon and Wilson \cite{Kenyon2011a, Kenyon2011}
introduced a matrix $M$ defined as follows.  The rows and columns of $M$ are
indexed by $\lambda,\mu\in \Dyck(2n)$, and $M_{\lambda,\mu} =1$ if
$\lambda\succ\mu$, and $M_{\lambda,\mu} =0$ otherwise.

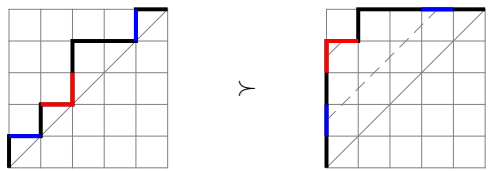
\begin{figure}
  \centering
\begin{pspicture}(0,0)(5,5)
\dyckgrid5
\psline(0,0)(0,1)(1,1)(1,2)(2,2)(2,4)(4,4)(4,5)(5,5)
{\psset{linecolor=blue} \psline(0,1)(1,1) \psline(4,4)(4,5)} 
{\psset{linecolor=red} \psline(1,2)(2,2)(2,3)}
\end{pspicture}%
\begin{pspicture}(0,0)(5,5)
\rput(2.5,2.5){$\succ$}
\end{pspicture}%
\begin{pspicture}(0,0)(5,5)
\dyckgrid5
\psline(0,0)(0,4)(1,4)(1,5)(5,5)
{
\psset{linewidth=.1pt,linestyle=dashed, linecolor=gray}
\psline[linestyle=dashed](0,1.5)(3.5,5)
\psline[linestyle=dashed](0,3.5)(.5,4)
}
{\psset{linecolor=blue} \psline(0,1)(0,2) \psline(3,5)(4,5)} 
{\psset{linecolor=red} \psline(0,3)(0,4)(1,4)} 
\end{pspicture}%
\caption{An example of the order $\succ$ on Dyck paths.}
  \label{fig:order}
\end{figure}

A \emph{ribbon} is a connected skew shape which does not contain a $2\times 2$
box.  A \emph{Dyck tile} is a ribbon such that the centers of the cells form a
Dyck path. The \emph{length} of a Dyck tile is the length of the Dyck path
obtained by joining the centers of the cells, see Figure~\ref{fig:dycktile}.

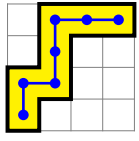
\begin{figure}
  \centering
\begin{pspicture}(0,0)(4,4)
  \psgrid (0,0)(4,4)  
\pspolygon[fillstyle=solid,fillcolor=yellow] (0,0)(0,2)(1,2)(1,4)(4,4)(4,3)(2,3)(2,1)(1,1)(1,0)(0,0)
\psline[showpoints=true,linewidth=1pt,linecolor=blue](.5,.5)(.5,1.5)(1.5,1.5)(1.5,2.5)(1.5,3.5)(2.5,3.5)(3.5,3.5)
\end{pspicture}%
 \caption{A Dyck tile of length $6$.}
  \label{fig:dycktile}
\end{figure}

For $\lambda,\mu\in\Dyck(2n)$, a \emph{(cover-inclusive) Dyck tiling} of $\lm$
is a tiling with Dyck tiles such that for two Dyck tiles $T_1$ and $T_2$, if
$T_1$ has a cell to the southeast of a cell of $T_2$, then a southeast
translation of $T_2$ is contained in $T_1$. We denote by $\D(\lm)$ the set of
Dyck tilings of $\lm$. For $T\in\D(\lm)$, we denote by $|T|$ the number of tiles
in $T$.  Note that $\D(\lambda/\lambda)$ has only one tiling, the empty tiling
$\emptyset$ with $|\emptyset|=0$. 

Kenyon and Wilson \cite[Theorem~1.5]{Kenyon2011} showed that the inverse
matrix $M^{-1}$ of $M$ can be expressed using Dyck tilings:
\[
  M^{-1}_{\lambda,\mu}  = (-1)^{|\lm|} \times |\D(\lm)|,
\]
where $|\lm|$ denotes the number of cells in $\lm$. 

In this paper we prove two conjectures of Kenyon and Wilson on $q$-analogs of
the sum of the absolute values of the entries in a row or a column of $M^{-1}$. In
order to state the conjectures we need the following notions.

A \emph{chord} of a Dyck path $\lambda$ is a matching pair of an up step and a
down step. We denote by $\C(\lambda)$ the set of chords of $\lambda$. For
$c\in\C(\lambda)$, the \emph{length} $|c|$ of $c$ is defined to be the
difference between the $x$-coordinates of the starting point of the up step and
the ending point of the down step.  The \emph{height} $\HT(c)$ of $c$ is defined
to be $i$ if $c$ is between the lines $y=x+i-1$ and $y=x+i$. See
Figure~\ref{fig:chords} for an example.

We use the standard notations for $q$-integers:
\begin{align*}
\qint{n} &= 1+q+\cdots +q^{n-1} , \\
\qint{n}! &= \qint{1} \qint{2} \cdots \qint{n}.
\end{align*}
Also, we denote $[n]=\{1,2,\dots,n\}$, which should not be confused with 
the $q$-integers. 

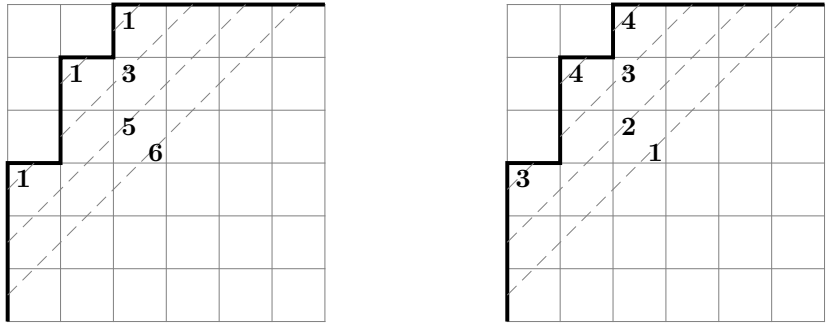
\begin{figure}
  \centering
\psset{unit=20pt}
\bf
  \begin{pspicture}(0,0)(6,6)
   \psgrid (0,0)(6,6) 
\psline(0,0)(0,3)(1,3)(1,5)(2,5)(2,6)(6,6)
\psset{linewidth=.1pt,linestyle=dashed, linecolor=gray}
\psline(0,.5)(5.5,6)
\psline(0,1.5)(4.5,6)
\psline(0,2.5)(.5,3)
\psline(1,3.5)(3.5,6)
\psline(1,4.5)(1.5,5)
\psline(2,5.5)(2.5,6)
\rput(-.2,.2){
\rput(3,3){6}
\rput(2.5,3.5){5}
\rput(.5,2.5){1}
\rput(2.5,4.5){3}
\rput(1.5,4.5){1}
\rput(2.5,5.5){1}
}
  \end{pspicture}\qquad \qquad \qquad
  \begin{pspicture}(0,0)(6,6)
   \psgrid (0,0)(6,6) 
\psline(0,0)(0,3)(1,3)(1,5)(2,5)(2,6)(6,6)
\psset{linewidth=.1pt,linestyle=dashed, linecolor=gray}
\psline(0,.5)(5.5,6)
\psline(0,1.5)(4.5,6)
\psline(0,2.5)(.5,3)
\psline(1,3.5)(3.5,6)
\psline(1,4.5)(1.5,5)
\psline(2,5.5)(2.5,6)
\rput(-.2,.2){
\rput(3,3){1}
\rput(2.5,3.5){2}
\rput(.5,2.5){3}
\rput(2.5,4.5){3}
\rput(1.5,4.5){4}
\rput(2.5,5.5){4}
}
  \end{pspicture}
\caption{The lengths (left) and the heights (right) of the chords of a Dyck path.}
  \label{fig:chords}
\end{figure}

We now state the main theorems. 

\begin{thm}\cite[Conjecture 1]{Kenyon2011}\label{thm:1}
  Given a Dyck path $\lambda\in\Dyck(2n)$, we have
\[
\sum_{\mu\in\Dyck(2n)} \sum_{T\in\D(\lm)} q^{(|\lm|+|T|)/2} 
= \frac{\qint{n}!}{\prod_{c\in \C(\lambda)} \qint{|c|}}.
\]
\end{thm}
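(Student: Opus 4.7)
The plan is to interpret the right-hand side as a $q$-hook-length formula for a natural forest attached to $\lambda$, and then to transport Dyck tilings bijectively onto the resulting linear-extension side. Let $F_\lambda$ denote the rooted forest whose vertices are the chords $c\in\C(\lambda)$, with $c'$ a child of $c$ whenever $c'$ is immediately nested inside $c$. A short induction on $n$ shows that for every $c\in\C(\lambda)$ the length $|c|$ equals the size of the subtree of $F_\lambda$ rooted at $c$. Hence the right-hand side is the $q$-analog of the hook-length formula for the forest $F_\lambda$, and by a classical result (Bj\"orner--Wachs, or Stanley's $P$-partition theory) it equals
\[
  \sum_{\sigma \in L(F_\lambda)} q^{\mathrm{maj}(\sigma)},
\]
where $L(F_\lambda)$ is the set of linear extensions $\sigma:\C(\lambda)\to [n]$ of $F_\lambda$ with respect to a fixed natural labeling, and $\mathrm{maj}$ is the major index.

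With this reformulation, the theorem reduces to constructing a weight-preserving bijection
\[
  \Phi : \{(\mu,T) : \mu \preceq \lambda,\ T\in\D(\lm)\} \longrightarrow L(F_\lambda), \qquad \mathrm{maj}\bigl(\Phi(\mu,T)\bigr) = \tfrac{1}{2}\bigl(|\lm|+|T|\bigr).
\]
I would build $\Phi$ recursively by peeling the tiling one tile at a time: at each stage locate a canonical outermost Dyck tile $\tau$ in $T$ (say the tile lying directly above the rightmost peak of $\mu$ that is still strictly below $\lambda$), delete $\tau$ while simultaneously pushing the corresponding step of $\mu$ one row up, and assign the largest unused label to the chord of $\lambda$ whose ``flip'' produced $\tau$. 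The cover-inclusive condition on Dyck tilings forces nested tiles to be peeled in nested order, so the resulting map $\sigma:\C(\lambda)\to[n]$ is automatically a linear extension of $F_\lambda$; reversibility follows by running the procedure backwards, growing back one tile per chord in the prescribed order.

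The main obstacle will be verifying the weight correspondence $\mathrm{maj}(\sigma) = \tfrac{1}{2}(|\lm|+|T|)$. A Dyck tile of length $\ell$ contributes $\ell+1$ to $|\lm|$ and $1$ to $|T|$, giving $(\ell+2)/2$ per tile, and one must pin down the rule for choosing $\tau$ so that this matches exactly the increment to $\mathrm{maj}(\sigma)$ created (or destroyed) by the descents around the newly placed label. Handling ties in the ``outermost tile'' choice so that both the bijection is well-defined and the statistic transports correctly is where the combinatorial subtlety lives. As a backup I would fall back on induction via the first-return decomposition $\lambda=U\alpha D\beta$ with $a=|\alpha|/2$ and $b=|\beta|/2$: the right-hand side factors cleanly as $\qbinom{n}{a+1}\,\mathrm{RHS}_\alpha\,\mathrm{RHS}_\beta$ because the root of $F_{U\alpha D}$ has hook length $a+1$, and one would try to establish the same recursion for the left-hand side by decomposing each pair $(\mu,T)$ according to how $\mu$ and the tiles of $T$ straddle the separating column $x=a+1$, with the $q$-binomial coefficient arising as the shuffle generating function for interleaving the labels of the two halves.
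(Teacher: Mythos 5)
Your reformulation of the right-hand side is correct and takes a genuinely different route from this paper: since $|c|$ equals the number of chords weakly nested inside $c$, the product $\prod_{c\in\C(\lambda)}\qint{|c|}$ is the $q$-hook-length product of the forest of chords, so by Bj\"orner--Wachs the right-hand side is indeed $\sum_{\sigma\in L(F_\lambda)}q^{\mathrm{maj}(\sigma)}$. This is precisely the strategy of the bijective proof in \cite{KMPW}, whereas the present paper argues analytically: Lemma~\ref{lem:reform} rewrites the exponent as $|\lm|-\|T\|$, the sum is generalized to lattice paths with shifted endpoints and truncated Dyck tilings (the quantity $B_q(\lambda;a,b)$ of Theorem~\ref{thm:gen}), and the generalization is proved by a three-step induction whose engine is the $q$-Chu--Vandermonde identity. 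The extra parameters $a,b$ exist exactly to absorb the tiles that straddle the cut when $\lambda$ is decomposed. Your route, if completed, buys a bijection and hence more refined information; the paper's route buys a self-contained proof from classical $q$-identities.

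As written, however, your argument has a genuine gap, and it sits where you say it does. First, the peeling map is not well defined: you propose to remove one tile per chord, assigning one label per deletion, but $|T|$ need not equal $n$ (for $\lambda$ the zigzag path and $\mu=\Delta_n$ one can have $|T|=\binom{n}{2}$ unit tiles), so a correct procedure must insert or delete a variable number of tiles per chord, and your rule for selecting the "canonical outermost tile" does not determine that batch. Second, the identity $\mathrm{maj}(\sigma)=\tfrac12(|\lm|+|T|)$ is exactly the content of the theorem once the bijection is fixed; your per-tile accounting ($k+1$ for a tile of length $2k$) is correct but you give no mechanism matching these increments to descent contributions, and you acknowledge this is unverified. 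The backup induction has the same hidden difficulty: the factorization of the left-hand side under $\lambda=U\alpha D\beta$ requires controlling tiles of $T$ that cross the separating antidiagonal, which is precisely what forces this paper to introduce $B_q(\lambda;a,b)$ for general $a,b$ together with Lemmas~\ref{lem:split} and~\ref{lem:lambda-}; asserting that $\qbinom{n}{a+1}$ "arises as the shuffle generating function" is a plausible guess, not a proof of that recursion.
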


\begin{thm}\cite[Conjecture 2]{Kenyon2011}\label{thm:2}
  Given a Dyck path $\mu\in\Dyck(2n)$, we have
\[
\sum_{\lambda\in\Dyck(2n)} \sum_{T\in\D(\lm)} q^{|T|} 
= \prod_{c\in \C(\mu)} \qint{\HT(c)}.
\]
\end{thm}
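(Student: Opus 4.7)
My plan is to prove Theorem~\ref{thm:2} by induction on $n$, establishing the factorization
\[
\sum_{\lambda} \sum_{T \in \D(\lambda/\mu)} q^{|T|} = \qint{h} \cdot \sum_{\lambda'} \sum_{T' \in \D(\lambda'/\mu')} q^{|T'|},
\]
where $\mu'$ is obtained from $\mu$ by removing its leftmost peak $c$ (of height $h = \HT(c)$). Writing $\mu = u^h d \tilde\mu$ and $\mu' = u^{h-1}\tilde\mu$, the chord $c$ is innermost, so $\C(\mu') = \C(\mu) \setminus \{c\}$ with all heights preserved, and consequently the product $\prod_{c' \in \C(\mu)} \qint{\HT(c')}$ factors as $\qint{h} \cdot \prod_{c' \in \C(\mu')} \qint{\HT(c')}$. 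The inductive hypothesis applied to $\mu'$ then yields Theorem~\ref{thm:2}.

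To prove the factorization, I would construct a bijection
\[
\Phi : \bigl\{ (\lambda, T) : T \in \D(\lambda/\mu) \bigr\} \;\longleftrightarrow\; \bigl\{ (\lambda', T', k) : T' \in \D(\lambda'/\mu'),\; 0 \le k \le h - 1 \bigr\}
\]
satisfying $|T| = |T'| + k$. Geometrically, the integer $k$ would record the number of single-cell tiles of $T$ that form a stack at the top of the leftmost column just below the peak. This stack is well-defined because any Dyck tile confined to a single column must be a single cell, and a short application of the cover-inclusive condition shows that such single-cell tiles in column~$0$ must sit above (not below) any multi-column tiles in that column. The bound $k \le h - 1$ follows from the fact that column~$0$ of $\lambda/\mu$ contains at most $h - 1$ cells, since $\lambda$ must begin with at least one up step.

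The main technical work is showing that stripping this stack and contracting the peak of $\mu$ yields a valid tiling of $\lambda'/\mu'$, and that the map is reversible. The subtle point is the treatment of multi-column tiles of $T$ whose cells include the peak's column: under the grid contraction induced by $\mu \to \mu'$, such a tile may absorb the peak's cell and thereby shrink to a smaller tile in $T'$. I would verify that this transformation preserves the cover-inclusive condition in both directions, that it is injective, and that the reverse operation of reinserting a stack of $k$ single cells atop any $(\lambda', T')$ produces a valid cover-inclusive tiling for a uniquely determined Dyck path $\lambda$. The weight identity $|T| = |T'| + k$ is then immediate from the construction, and iterating the recursion down to the empty Dyck path completes the proof of Theorem~\ref{thm:2}.
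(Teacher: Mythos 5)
Your argument is, at bottom, the paper's own proof telescoped. The paper inducts on $|\delta_{n-1}/\mu|$ by looking at the cell $s$ sitting directly under a chosen peak of $\mu$ and splitting into two cases --- $s$ is a lone one-cell tile (strip it, raising $\mu$ to $\mu\cup s$), or it is not (collapse the antidiagonal slice through $s$, dropping to $\Dyck(2n-2)$) --- and the identity $q\qint{h-1}+1=\qint{h}$ does the bookkeeping. Your correspondence $(\lambda,T)\leftrightarrow(\lambda',T',k)$ is exactly what you get by applying the first case $k$ times at the leftmost peak and then the second case once: the intermediate upper paths are $u^{h-k}du^{k}\tilde\mu$, and collapsing the slice under the peak of $u^{h-k}du^{k}\tilde\mu$ lands on $u^{h-1}\tilde\mu=\mu'$ for every $k$, which is why a single target $\D(*/\mu')$ serves all $h$ values of $k$. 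So the two geometric moves are identical, and the ``main technical work'' you defer --- that collapsing the slice is reversible, i.e.\ that re-inserting it extends each tile crossing it by exactly one cell and creates no new one-cell tile in the position under the peak --- is precisely the content of the paper's Case 2. Nothing is gained or lost by the repackaging, though extracting $\qint{h}$ in one stroke is arguably cleaner than the two-case recursion.

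There is, however, a concretely false claim in your justification. Cover-inclusiveness only relates two tiles when one contains a cell that is a \emph{diagonal} $(1,-1)$-translate of a cell of the other; a cell directly below another cell in the same column imposes no condition at all. Hence a one-cell tile in column $0$ can perfectly well sit \emph{below} a multi-column tile of that column: the paper's Figure~\ref{fig:psi} shows exactly this configuration (reading column $0$ from bottom to top there: a lone cell, then a tile of length $6$ occupying three cells of the column, then another lone cell). So your assertion that ``single-cell tiles in column~$0$ must sit above any multi-column tiles in that column'' is wrong, and if $k$ were taken to be the total number of one-cell tiles in column $0$ the weight identity $|T|=|T'|+k$ would fail. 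The fix is to define $k$ as the length of the maximal run of one-cell tiles starting from the \emph{topmost} cell of column $0$ (the cell directly under the peak); lone cells lower in the column, cut off from the top by a longer tile, are not counted and are simply carried through the contraction unchanged. With that definition your bijection does go through: the facts you actually need are that column $0$ has at most $h-1$ cells, that placing a one-cell tile in the cell directly under a peak of $\mu$ can never violate cover-inclusiveness (no cell of $\lm$ lies diagonally northwest of it), and the reversibility of the slice collapse. As written, though, the well-definedness argument for $k$ does not hold, and the contraction you invoke must be performed along the antidiagonal through the cell in row $h-k-1$ of column $0$ (a level depending on $k$), not ``at the peak of $\mu$'' in a $k$-independent way.
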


Our proof of Theorem~\ref{thm:2} is simpler than the proof of
Theorem~\ref{thm:1}. So we will first present the proof of Theorem~\ref{thm:2}.

The rest of the paper is organized as follows. In Section~\ref{sec:proof2} we
prove Theorem~\ref{thm:2}.  In Section~\ref{sec:trunc-dyck-tilings} we introduce
truncated Dyck tilings, which are very similar to Dyck tilings, and some
properties of them. We then state a generalization of Theorem~\ref{thm:1}. In
Section~\ref{sec:proof1} we prove the generalization of Theorem~\ref{thm:1}.  In
Section~\ref{sec:anoth-proof-prop} we give another proof of an important
identity used in the proof of the generalization of Theorem~\ref{thm:1}.  In
Section~\ref{sec:bijection} we construct a bijection between Dyck tilings and
complete matchings, and discuss some applications of the bijection.  In
Section~\ref{sec:final-remarks} we give final remarks. 

\section{Proof of Theorem~\ref{thm:2}}
\label{sec:proof2}

We denote by $\delta_{n-1}$ the staircase partition $(n-1,n-2,\dots,1)$. Note
that if $\mu\in\Dyck(2n)$, we have $\mu\subseteq\delta_{n-1}$.

We will prove Theorem~\ref{thm:2} by induction on the number $m(\mu)$ of cells
in $|\delta_{n-1}/\mu|$, where $n$ is the half-length of $\mu$. If $m(\mu)=0$,
then $\mu=\delta_{n-1}$ and the theorem is clear.  Assume $m\geq1$ and the
theorem is true for all $\nu$ with $m(\nu)<m$.  Now suppose $\mu\in\Dyck(2n)$
with $m(\mu)=m$.  Since $|\delta_{n-1}/\mu|=m\geq1$, we can pick a cell $s\in
\delta_{n-1}/\mu$ such that $\mu\cup s$ is also a partition. Let $h$ be the
height of the chord of $\mu$ contained in $s$.  Consider a tiling $T\in\D(\lm)$
for some $\lambda\in\Dyck(2n)$.  Then there are two cases.

Case 1: $s$ by itself is a tile in $T$.  Let $\mu'=\mu\cup\{s\}$.  Then
$T'=T\setminus\{s\}$ is a tiling in $\D(\lambda/\mu')$. Thus the sum of
$q^{|T|}$ for all such choices of $\lambda$ and $T$ equals
\begin{align*}
\sum_{\lambda\in \Dyck(2n)}
\sum_{T'\in\D(\lambda/\mu')} q^{|T'|+1}.
\end{align*}
By the induction hypothesis, the above sum is equal to
\begin{equation}
  \label{eq:15}
q \prod_{c\in \C(\mu')} \qint{\HT(c)}
=\frac{q\qint{h-1}}{\qint{h}} \prod_{c\in \C(\mu)} \qint{\HT(c)},
\end{equation}
where we use the fact that $\mu'$ has one more chord of height $h-1$ and one
less chord of height $h$ than $\mu$, see Figure~\ref{fig:notcovered}.

\begin{figure}
  \centering
\psset{unit=20pt}
  \begin{pspicture}(0,0)(6,6)
    \dyckgrid6
   \psline(0,0)(0,2)(1,2)(1,4)(2,4)(2,5)(3,5)(3,6)(6,6)
\psset{linewidth=.1pt,linestyle=dashed, linecolor=gray}
\psline(2,4.5)(2.5,5)
\psline(1,2.5)(4.5,6)
\rput(-.2,.2){
\rput(3.5,4){$h-1$}
\rput(2.5,4.5){$h$}
}
  \end{pspicture}\qquad\qquad
  \begin{pspicture}(0,0)(6,6)
    \dyckgrid6
   \psline(0,0)(0,2)(1,2)(1,4)(3,4)(3,6)(6,6)
\psframe[linecolor=blue](2,4)(3,5)
\rput(2.5,4.5){$s$}
\psset{linewidth=.1pt,linestyle=dashed, linecolor=gray}
\psline(1,2.5)(2.5,4)
\psline(3,4.5)(4.5,6)
\rput(-.2,.2){
\rput(2.5,3){$h-1$}
\rput(4.5,5){$h-1$}
}
  \end{pspicture}
  \caption{$\mu\cup\{s\}$ has one more chord of height $h-1$ and one less chord
    of height $h$ than $\mu$.}
  \label{fig:notcovered}
\end{figure}
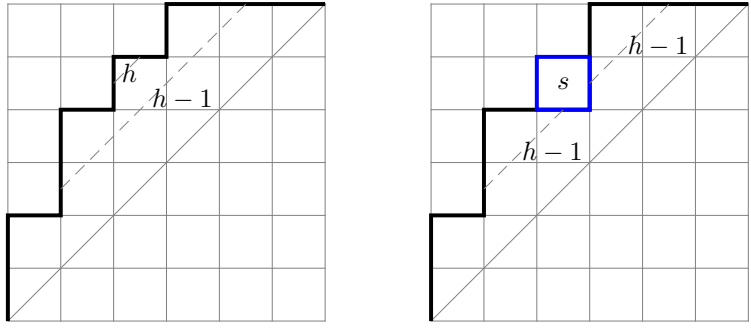

Case 2: Otherwise we have either $s\not\in \lm$ or $s$ is covered by a Dyck tile
of length greater than $0$. Then we collapse the slice containing $s$, in other
words, remove the region in $\lm$ bounded by the two lines with slope $-1$
passing through the northeast corner and the southwest corner of $s$ and attach
the two remaining regions, see Figure~\ref{fig:slice}. Let $\lambda'$, $\mu'$,
and $T'$ be the resulting objects obtained from $\lambda$, $\mu$, and $T$ in
this way. Since the collapsed slice is completely covered by Dyck tiles of
length greater than $0$, the original objects $\lambda$, $\mu$, and $T$ can be
obtained from $\lambda'$, $\mu'$, and $T'$. We also have $|T|=|T'|$.  Thus, by
the induction hypothesis, the sum of $q^{|T|}$ for all possible choices
of $\lambda$ and $T$ is equal to
\begin{equation}
  \label{eq:24}
\sum_{\lambda'\in \Dyck(2n)} 
\sum_{T'\in\D(\lambda'/\mu')} q^{|T'|}
=\prod_{c\in \C(\mu')} \qint{\HT(c)}=\frac{1}{\qint{h}} \prod_{c\in \C(\mu)} \qint{\HT(c)}.
\end{equation}

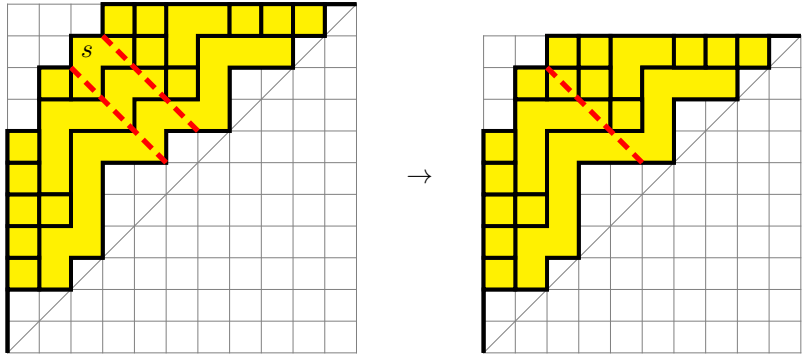
\begin{figure}
  \centering
  \begin{pspicture}(0,0)(11,11) 
\dyckgrid{11}
\psline(0,0)(0,7)(1,7)(1,9)(2,9)(2,10)(3,10)(3,11)(11,11)
\psline(0,0)(0,2)(2,2)(2,3)(3,3)(3,6)(5,6)(5,7)(7,7)(7,9)(9,9)(9,10)(10,10)(10,11)(11,11)
\psset{fillstyle=solid,fillcolor=yellow}
\rput(-1,1){
\pspolygon(2,4)(2,5)(2,7)(4,7)(4,8)(5,8)(6,8)(6,7)(5,6)(3,6)(3,4)
\rput(-1,1){
\pspolygon(10,8)(8,8)(8,6)(6,6)(6,5)(4,5)(4,2)(3,2)(3,1)(3,0)(4,0)(4,1)(5,1)(5,4)(7,4)(7,5)(9,5)(9,7)(11,7)(11,8)(10,8)}
\pspolygon(6,10)(7,10)(8,10)(8,9)(7,9)(7,8)(6,8)(6,9)(6,10)
\rput(-3,-1){\pspolygon(6,10)(7,10)(8,10)(8,9)(7,9)(7,8)(6,8)(6,9)(6,10)}
}
\psframe(0,2)(1,3)
\rput(0,1){\psframe(0,2)(1,3)}
\rput(0,2){\psframe(0,2)(1,3)}
\rput(0,3){\psframe(0,2)(1,3)}
\rput(0,4){\psframe(0,2)(1,3)}
\rput(1,2){\psframe(0,2)(1,3)}
\rput(1,6){\psframe(0,2)(1,3)}
\rput(5,6){\psframe(0,2)(1,3)}
\rput(4,7){\psframe(0,2)(1,3)}
\rput(4,8){\psframe(0,2)(1,3)}
\rput(3,8){\psframe(0,2)(1,3)}
\rput(7,8){\psframe(0,2)(1,3)}
\rput(8,8){\psframe(0,2)(1,3)}
\rput(9,8){\psframe(0,2)(1,3)}
\psline[linewidth=2pt, linestyle=dashed,linecolor=red](2,9)(5,6)
\psline[linewidth=2pt, linestyle=dashed,linecolor=red](3,10)(6,7)
\rput(2.5,9.5){$s$}
\end{pspicture}%
  \begin{pspicture}(0,0)(4,11)
\rput(2,5.5){$\rightarrow$}
\end{pspicture}%
  \begin{pspicture}(0,0)(11,11)
\dyckgrid{10}
\psline(0,0)(0,7)(1,7)(1,9)(2,9)(2,10)(10,10)
\psline(0,0)(0,2)(2,2)(2,3)(3,3)(3,6)(5,6)(6,6)(6,8)(8,8)(8,9)(9,9)(9,10)(10,10)
\psset{fillstyle=solid,fillcolor=yellow}
\rput(-1,1){
\pspolygon(3,4)(2,4)(2,7)(5,7)(5,6)(3,6)(3,4)
\rput(-1,-1){\pspolygon(6,10)(8,10)(8,9)(7,9)(7,8)(6,8)(6,10)}
}
\pspolygon (2,2)(2,3)(3,3)(3,6)(5,6)(6,6)(6,8)(8,8)(8,9)(5,9)(5,7)(2,7)(2,4)(1,4)(1,2)
\rput(0,0){\psframe(0,2)(1,3)}
\rput(0,1){\psframe(0,2)(1,3)}
\rput(0,2){\psframe(0,2)(1,3)}
\rput(0,3){\psframe(0,2)(1,3)}
\rput(0,4){\psframe(0,2)(1,3)}
\rput(1,2){\psframe(0,2)(1,3)}
\rput(1,6){\psframe(0,2)(1,3)}
\rput(2,6){\psframe(0,2)(1,3)}
\rput(2,7){\psframe(0,2)(1,3)}
\rput(3,6){\psframe(0,2)(1,3)}
\rput(3,7){\psframe(0,2)(1,3)}
\rput(4,5){\psframe(0,2)(1,3)}
\rput(6,7){\psframe(0,2)(1,3)}
\rput(7,7){\psframe(0,2)(1,3)}
\rput(8,7){\psframe(0,2)(1,3)}
\psline[linewidth=2pt,linestyle=dashed,linecolor=red](2,9)(5,6)
\end{pspicture}
 \caption{Collapsing the slice containing $s$.}
  \label{fig:slice}
\end{figure}

Summing \eqref{eq:15} and \eqref{eq:24}, the theorem is also true for $\mu$.  By
induction, the theorem is proved.

We note that the proof in this section was also discovered independently by
Matja\v z Konvalinka (personal communication with Matja\v z Konvalinka).

It is not difficult to construct a bijection between Dyck tilings and Hermite
histories (see Section~\ref{sec:bijection} for the definition) by the same
recursive manner as in the proof in this section. In fact, the bijection
obtained in this way has a non-recursive description, which we will present in
Section~\ref{sec:bijection}.

\section{Truncated Dyck tilings}
\label{sec:trunc-dyck-tilings}

In this section we state a generalization of Theorem~\ref{thm:1}. We first need
to reformulate Theorem~\ref{thm:1}. 

For a Dyck tiling $T$ we define $\|T\|$ to be the sum of the half-lengths of all
Dyck tiles in $T$.

\begin{lem}\label{lem:reform}
For $T\in \D(\lm)$, we have
\[
q^{(|\lm|+|T|)/2}   = q^{|\lm|-\|T\|}.
\]
\end{lem}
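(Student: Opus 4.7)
The plan is to reduce the identity to a simple cell-counting identity. The exponents $(|\lm|+|T|)/2$ and $|\lm|-\|T\|$ are equal precisely when $|\lm| = 2\|T\| + |T|$, so I would aim to prove this last equation by summing over tiles.

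The key observation is the following. A Dyck tile whose associated Dyck path has length $2k$ consists of exactly $2k+1$ cells, because a Dyck path of length $2k$ passes through $2k+1$ lattice points, and (by the definition of a Dyck tile) the cells of the tile are in bijection with these points via their centers. So if $T = \{T_1,\dots,T_r\}$ with Dyck tile $T_i$ of length $2k_i$, then the number of cells contributed by $T_i$ is $2k_i+1$. Summing and using the fact that $T$ is a tiling of $\lm$,
\begin{equation*}
|\lm| \;=\; \sum_{i=1}^{r} (2k_i+1) \;=\; 2\sum_{i=1}^{r} k_i \;+\; r \;=\; 2\|T\| + |T|,
\end{equation*}
where the last equality uses the definitions $\|T\|=\sum_i k_i$ (sum of half-lengths) and $|T|=r$ (number of tiles).

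From $|\lm| = 2\|T\|+|T|$ I would conclude $(|\lm|+|T|)/2 = \|T\|+|T| = |\lm|-\|T\|$, which immediately gives $q^{(|\lm|+|T|)/2} = q^{|\lm|-\|T\|}$. There is no real obstacle here; the only subtlety is keeping the length convention straight (the author's ``length'' is that of the underlying Dyck path, not the number of cells), which is precisely what makes the count $2k+1$ rather than $2k$ cells per tile of length $2k$.
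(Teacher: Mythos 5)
Your proof is correct and is essentially the paper's own argument: both rest on the observation that a Dyck tile of length $2k$ occupies $2k+1$ cells, the paper checking the resulting contribution $q^{k+1}$ tile by tile on each side, while you sum the cell counts into the global identity $|\lm|=2\|T\|+|T|$ and equate exponents. The two computations are the same.
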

\begin{proof}
  Let $\eta$ be a Dyck tile in $T$. We will compute the contribution of $\eta$
  as a factor in both sides of the equation. Suppose $\eta$ is of length
  $2k$. Then $|\eta|=2k+1$, and the contribution of $\eta$ in the left hand side
  (resp.~right hand side) is $q^{((2k+1)+1)/2}=q^{k+1}$
  (resp.~$q^{(2k+1)-k}=q^{k+1}$). Since each tile contributes the same factor in
  both sides we get the equation.
\end{proof}

By Lemma~\ref{lem:reform}, we can rewrite Theorem~\ref{thm:1} as follows.

\begin{thm}\label{thm:1r}
  Given a Dyck path $\lambda\in\Dyck(2n)$, we have
\[
\sum_{\mu\in\Dyck(2n)} \sum_{T\in\D(\lm)} q^{|\lm|-\|T\|} 
= \frac{\qint{n}!}{\prod_{c\in \C(\lambda)} \qint{|c|}}.
\]
\end{thm}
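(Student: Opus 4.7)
The plan is to prove Theorem~\ref{thm:1r} by inducting on the half-length $n$, following the author's stated route through truncated Dyck tilings. The starting observation is that the right-hand side factorizes cleanly under the first-return decomposition of $\lambda$: if $\lambda=U\lambda_1 D\lambda_2$ with $\lambda_1\in\Dyck(2k)$ and $\lambda_2\in\Dyck(2(n-1-k))$, then the chord matching the initial $U$ has length $k+1$, and
\[
\frac{\qint{n}!}{\prod_{c\in\C(\lambda)}\qint{|c|}}
=\qbinom{n}{k+1}\cdot\frac{\qint{k}!}{\prod_{c\in\C(\lambda_1)}\qint{|c|}}\cdot\frac{\qint{n-1-k}!}{\prod_{c\in\C(\lambda_2)}\qint{|c|}}.
\]
So the whole problem reduces to proving the same factorization on the left-hand side, with $\qbinom{n}{k+1}$ appearing as a $q$-shuffle generating function.

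Concretely, the steps I would carry out are as follows. First, I would define truncated Dyck tilings so that any pair $(\mu,T)$ with $\mu\subseteq\lambda$ and $T\in\D(\lambda/\mu)$ can be sliced along the vertical line through the first-return point $(k+1,k+1)$ of $\lambda$ into two smaller configurations, and check that the statistic $|\lambda/\mu|-\|T\|$ splits additively under this cut. Next, I would formulate a generalization of Theorem~\ref{thm:1r} valid for truncated Dyck tilings. I would then run the induction on $n$ using the first-return decomposition, invoking a $q$-shuffle identity of the form $\qbinom{n}{k+1}=\sum_{\text{shuffles}} q^{\mathrm{inv}}$ to produce the $q$-binomial factor, and finally specialize back to ordinary Dyck paths to recover the original identity.

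The main obstacle I anticipate is the first step. The difficulty is that $\mu$ need not return to the diagonal at the first-return point of $\lambda$, and tiles of $T$ may straddle the cutting line, so the two pieces on either side are not honest Dyck tilings. This is precisely the reason for introducing the truncated category, which must be designed so that (a) the slicing is a bijection onto pairs of truncated configurations together with a shuffle, and (b) the statistic $|\lambda/\mu|-\|T\|$ really is additive across the slice. Once these definitions are in place, the induction should run immediately from the factorized right-hand side, modulo verifying a small base case (e.g.\ $n=0$, or $\lambda=U^nD^n$, where both sides collapse to $1$).
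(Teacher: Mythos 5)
Your overall plan --- reformulate via truncated tilings, state a generalization, and induct on $n$ using a decomposition of $\lambda$ --- matches the architecture of the paper's proof, and your factorization of the right-hand side under the first-return decomposition $\lambda=U\lambda_1D\lambda_2$ is correct. But the proposal stops exactly where the work begins, and the one concrete geometric choice you make is the wrong one. Cutting along the \emph{vertical} line through the first-return point $(k+1,k+1)$ does not interact well with Dyck tiles: the centers of the cells of a tile form a Dyck path, and a vertical cut leaves a prefix of that path on one side, which is generally not a Dyck path, so the pieces are not Dyck tiles (truncated or otherwise) and the statistic does not split in a controlled way. The cut that works is along the line of slope $-1$ through a return point of $\lambda$ to the diagonal (Lemma~\ref{lem:split}); this is compatible with the tiles because every truncated tile lies in a single layer between $\lambda+(-i+1,i-1)$ and $\lambda+(-i,i)$ (Lemma~\ref{lem:layer}), and the cell on the cut line has minimal height within its layer, so both halves of a severed tile are again truncated Dyck tiles.

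More importantly, even with the correct cut the left-hand side does \emph{not} factor as $\qbinom{n}{k+1}$ times the two smaller sums. Since $\mu$ crosses the cut line at an arbitrary shifted point, one obtains a convolution $B_q(\lambda_1*\lambda_2;a,b)=\sum_{i\ge0}B_q(\lambda_1;a,i)\,B_q(\lambda_2;i,b)$ over the crossing height $i$ (Lemma~\ref{lem:sep}). This is what forces the two-boundary-parameter generalization $B_q(\lambda;a,b)$, and the $q$-binomial factor is then extracted not from a shuffle identity but from the nontrivial statement $B_q(\Delta_{n_1}*\Delta_{n_2};a,b)=\qbinom{n_1+n_2}{n_1}B_q(\Delta_{n_1+n_2};a,b)$ (Proposition~\ref{prop:two}), whose proof ultimately rests on the $q$-Chu--Vandermonde identity (Lemma~\ref{lem:formula}) or a ${}_3\phi_2$ evaluation. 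Your appeal to ``a $q$-shuffle identity'' names the shape of the desired answer without supplying any mechanism for producing it from the tilings. Finally, the irreducible piece $U\lambda_1D$ still has to be handled separately, by peeling off the outer chord while accounting for the tiles of maximal length that wrap around it (Lemma~\ref{lem:lambda-}); your sketch does not address this step at all. As written, the proposal is a road map with the right destination but with the essential constructions and identities missing.
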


\begin{figure}
  \centering
  \begin{pspicture}(0,-1)(3,4)
\dyckgrid3
\psline[linecolor=blue](0,0)(0,2)(1,2)(1,3)(3,3)
  \end{pspicture}%
  \begin{pspicture}(0,0)(2,5)
\rput(1,2.5){$*$}
  \end{pspicture}%
 \begin{pspicture}(0,-1.5)(2,3.5)
\dyckgrid2
\psline[linecolor=red](0,0)(0,2)(2,2)
  \end{pspicture}%
  \begin{pspicture}(0,0)(2,5)
\rput(1,2.5){$=$}
  \end{pspicture}%
 \begin{pspicture}(0,0)(5,5)
\dyckgrid5
\psline[linecolor=blue](0,0)(0,2)(1,2)(1,3)(3,3)
\rput(3,3){\psline[linecolor=red](0,0)(0,2)(2,2)}
\end{pspicture}
 \caption{The $*$ operation on two Dyck paths.}
  \label{fig:operation}
\end{figure}
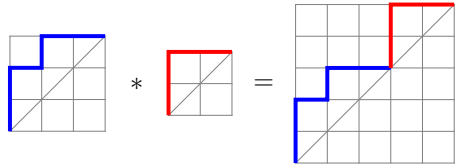

For two Dyck paths $\lambda$ and $\mu$ (not necessarily of the same length) we
define $\lambda*\mu$ to be the Dyck path obtained from $\lambda$ by attaching
$\mu$ at the end of $\lambda$, see Figure~\ref{fig:operation}.  For a
nonnegative integer $k$, we denote by $\Delta_k$ the Dyck path of length $2k$
consisting of $k$ consecutive up steps and $k$ consecutive down steps.  For
nonnegative integers $k_1,\dots,k_r$, we define
\[
\Delta_{k_1,\dots,k_r} = \Delta_{k_1} * \cdots * \Delta_{k_r}.
\]

For an object $X$, which may be a point, a lattice path, or a tile, we denote by
$X+(i,j)$ the translation of $X$ by $(i,j)$.  So far, we have only considered
$\lm$ for two Dyck paths $\lambda$ and $\mu$ starting and ending at the same
points. We extend this definition as follows.

Suppose $\lambda$ is a Dyck path from $O=(0,0)$ to $N=(n,n)$ and $\mu$ is a
lattice path from $P=O+(-a,a)$ to $Q=N+(-b,b)$ for some nonnegative integers $a$
and $b$ such that $\mu$ never goes below $\lambda$. Then we define $\lm$ to be
the region bounded by $\lambda$, $\mu$, and the segments $OP$ and $NQ$.  We
denote by $|\lm|$ the area of the region $\lm$. Note that this notation is
consistent with the number $|\lm|$ of cells of $\lm$ when $\lm$ is a skew
shape. Given $\lambda$, $a$, and $b$, we denote by $L(\lambda; a,b)$ the set of
all lattice paths from $P$ to $Q$ which never go below $\lambda$.

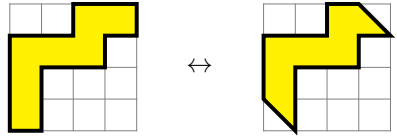
\begin{figure}
  \centering
  \begin{pspicture}(0,0)(4,4)
\psgrid(0,0)(4,4)
\psset{fillstyle=solid,fillcolor=yellow}
\rput(-2,-4){
\pspolygon(2,4)(2,5)(2,7)(4,7)(4,8)(5,8)(6,8)(6,7)(5,7)(5,6)(3,6)(3,4)
}
\end{pspicture}%
 \begin{pspicture}(0,0)(4,4)
\rput(2,2){$\leftrightarrow$}
\end{pspicture}%
  \begin{pspicture}(0,0)(4,4)
\psgrid(0,0)(4,4)
\psset{fillstyle=solid,fillcolor=yellow}
\rput(-2,-4){
\pspolygon(2,5)(2,7)(4,7)(4,8)(5,8)(6,7)(5,7)(5,6)(3,6)(3,4)
}
\end{pspicture}%
 \caption{A Dyck tile and the corresponding truncated Dyck tile.}
  \label{fig:trucated_Dyck_tile}
\end{figure}

\begin{defn}\label{defn:TD}
  A \emph{truncated Dyck tile} is a tile obtained from a Dyck tile of positive
  length by cutting off the northeast half-cell and the southwest half-cell as
  shown in Figure~\ref{fig:trucated_Dyck_tile}. A \emph{(cover-inclusive)
    truncated Dyck tiling} of a region $\lm$ is a tiling $T$ of a sub-region of
  $\lm$ with truncated Dyck tiles satisfying the following conditions:
\begin{itemize}
\item For each tile $\eta\in T$, if $(\eta+(1,-1))\cap \lm\ne\emptyset$, then
  there is another tile $\eta'\in T$ containing $(\eta+(1,-1))$. 
\item There are no two tiles sharing a border with slope $-1$.
\end{itemize}
Let $\TD(\lm)$ denote the set of truncated Dyck tilings of $\lm$. 
\end{defn}

If $\mu\in L(\lambda;0,0)$, there is a natural bijection between $\D(\lm)$ and
$\TD(\lm)$ as follows. For every tile in $T\in \D(\lm)$, remove the northeast
half-cell and the southwest half-cell as shown in
Figure~\ref{fig:dyck-trunc}. Note that the Dyck tiles of length $0$ simply
disappear.

\begin{figure}
  \centering
  \begin{pspicture}(0,0)(11,11) 
\dyckgrid{11}
\psline(0,0)(0,7)(1,7)(1,9)(2,9)(2,10)(3,10)(3,11)(11,11)
\psline(0,0)(0,2)(2,2)(2,3)(3,3)(3,6)(5,6)(5,7)(7,7)(7,9)(9,9)(9,10)(10,10)(10,11)(11,11)
\psset{fillstyle=solid,fillcolor=yellow}
\rput(-1,1){
\pspolygon(2,4)(2,5)(2,7)(4,7)(4,8)(5,8)(6,8)(6,7)(5,6)(3,6)(3,4)
\rput(-1,1){
\pspolygon(10,8)(8,8)(8,6)(6,6)(6,5)(4,5)(4,2)(3,2)(3,1)(3,0)(4,0)(4,1)(5,1)(5,4)(7,4)(7,5)(9,5)(9,7)(11,7)(11,8)(10,8)}
\pspolygon(6,10)(7,10)(8,10)(8,9)(7,9)(7,8)(6,8)(6,9)(6,10)
\rput(-3,-1){\pspolygon(6,10)(7,10)(8,10)(8,9)(7,9)(7,8)(6,8)(6,9)(6,10)}
}
\psframe(0,2)(1,3)
\rput(0,1){\psframe(0,2)(1,3)}
\rput(0,2){\psframe(0,2)(1,3)}
\rput(0,3){\psframe(0,2)(1,3)}
\rput(0,4){\psframe(0,2)(1,3)}
\rput(1,2){\psframe(0,2)(1,3)}
\rput(1,6){\psframe(0,2)(1,3)}
\rput(5,6){\psframe(0,2)(1,3)}
\rput(4,7){\psframe(0,2)(1,3)}
\rput(4,8){\psframe(0,2)(1,3)}
\rput(3,8){\psframe(0,2)(1,3)}
\rput(7,8){\psframe(0,2)(1,3)}
\rput(8,8){\psframe(0,2)(1,3)}
\rput(9,8){\psframe(0,2)(1,3)}
\end{pspicture}%
 \begin{pspicture}(0,0)(4,11)
\rput(2,5.5){$\leftrightarrow$}
\end{pspicture}%
  \begin{pspicture}(0,0)(11,11) 
\dyckgrid{11}
\psline(0,0)(0,7)(1,7)(1,9)(2,9)(2,10)(3,10)(3,11)(11,11)
\psline(0,0)(0,2)(2,2)(2,3)(3,3)(3,6)(5,6)(5,7)(7,7)(7,9)(9,9)(9,10)(10,10)(10,11)(11,11)
\psset{fillstyle=solid,fillcolor=yellow}
\rput(-1,1){
\pspolygon(3,4)(2,5)(2,7)(4,7)(4,8)(5,8)(6,7)(5,6)(3,6)(3,4)
\rput(-1,1){
\pspolygon(8,8)(8,6)(6,6)(6,5)(4,5)(4,2)(3,2)(3,1)(4,0)(4,1)(5,1)(5,4)(7,4)(7,5)(9,5)(9,7)(11,7)(10,8)}
\pspolygon(6,10)(7,10)(8,9)(7,9)(7,8)(6,9)(6,10)
\rput(-3,-1){\pspolygon(6,10)(7,10)(8,9)(7,9)(7,8)(6,9)(6,10)}
}
\end{pspicture}%
\caption{A Dyck tiling and the corresponding truncated Dyck tiling.}
  \label{fig:dyck-trunc}
\end{figure}
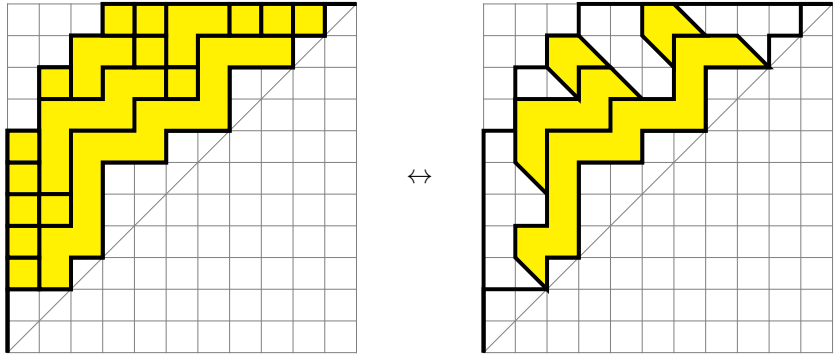

Let
\[
B_q(\lambda;a,b) = \sum_{\mu\in L(\lambda;a,b)}
\sum_{T\in \TD(\lm)} q^{|\lm|-\|T\|}.
\]
Note that $B_q(\lambda;a,b)$ is not necessarily a polynomial in $q$, but a
polynomial in $q^{1/2}$. In fact $B_q(\lambda;a,b)$ is a polynomial in $q$ if
and only if $a\equiv b\mod 2$.

We now state a generalization of Theorem~\ref{thm:1r}, or equivalently,
Theorem~\ref{thm:1}.
\begin{thm}\label{thm:gen}
  For $\lambda\in\Dyck(2n)$, nonnegative integers $a$ and $b$, we have
\[
B_q(\lambda;a,b)  
= \frac{\qint{n}!}{\prod_{c\in \C(\lambda)} \qint{|c|}} B_q(\Delta_n;a,b).
\]
\end{thm}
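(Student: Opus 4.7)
The plan is to prove Theorem~\ref{thm:gen} by induction on the distance from $\lambda$ to the top element $\Delta_n=U^nD^n$, measured for instance by the number of cells in $\delta_{n-1}/\lambda$ under the partition identification. For the base case $\lambda=\Delta_n$, the $i$th inner chord has length $n-i+1$, so $\prod_{c\in\C(\Delta_n)}\qint{|c|}=\qint{n}!$ and the asserted identity collapses to the tautology $B_q(\Delta_n;a,b)=B_q(\Delta_n;a,b)$.

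For the inductive step with $\lambda\neq\Delta_n$, there must exist a position where $\lambda$ contains the pattern $DU$; swapping it to $UD$ yields a Dyck path $\tilde\lambda$ that is strictly closer to $\Delta_n$ in the chosen distance. This local move changes the lengths of exactly two chords of $\lambda$ (the one whose down step moves and the one whose up step moves) while leaving every other chord unchanged. By the inductive hypothesis applied to $\tilde\lambda$, the theorem reduces to the local exchange identity
\[
\qint{|c_1|}\qint{|c_2|}\,B_q(\lambda;a,b)=\qint{|\tilde c_1|}\qint{|\tilde c_2|}\,B_q(\tilde\lambda;a,b),
\]
where $c_1,c_2\in\C(\lambda)$ and $\tilde c_1,\tilde c_2\in\C(\tilde\lambda)$ are the affected chords. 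I expect this to be the ``important identity'' whose alternative proof is presented in Section~\ref{sec:anoth-proof-prop}.

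Proving this local exchange identity is the main obstacle. My preferred route is bijective: construct a weight-preserving correspondence between pairs $(\mu,T)\in L(\lambda;a,b)\times\TD(\lm)$ and the analogous pairs for $\tilde\lambda$, that acts as the identity away from the swap site and reorganises the truncated tiles only within a small neighbourhood of the $DU\leftrightarrow UD$ position. The constraints of Definition~\ref{defn:TD} — in particular the prohibition on two truncated Dyck tiles sharing a slope-$-1$ border, together with the cover-inclusivity condition on the southeast translate — sharply restrict the local configurations at the swap site, which should make an explicit tile-by-tile analysis feasible; any residual discrepancy in $q^{|\lm|-\|T\|}$ caused by the rearrangement is to be absorbed into the explicit chord-length factors $\qint{|c_i|}$ and $\qint{|\tilde c_i|}$.

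If a clean bijection proves elusive, an algebraic backup is to stratify the sum defining $B_q(\lambda;a,b)$ according to the behaviour of $\mu$ and $T$ near the swap column, express each stratum as a product of contributions from $\mu$ and $T$ on either side of the swap times a local factor, and then reduce the resulting identity to a $q$-Vandermonde-type summation. Either way, the induction closes and yields Theorem~\ref{thm:gen}.
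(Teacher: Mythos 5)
Your reduction is sound as far as it goes: the base case is correct, every $\lambda\neq\Delta_n$ admits a $DU\to UD$ swap that raises it toward $\Delta_n$, and such a swap replaces the two chord lengths $\ell_1,\ell_2$ (of the chord closed by the moved down step and the chord opened by the moved up step) by $1$ and $\ell_1+\ell_2$, so the theorem for $\lambda$ would indeed follow from the theorem for $\tilde\lambda$ together with the local exchange identity $\qint{\ell_1}\qint{\ell_2}\,B_q(\lambda;a,b)=\qint{\ell_1+\ell_2}\,B_q(\tilde\lambda;a,b)$. The gap is that in your setup this identity carries the entire content of the theorem, and you do not prove it; neither of your proposed routes is workable as described. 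A weight-preserving correspondence localized near the swap site cannot exist, because the ratio $\qint{\ell_1+\ell_2}/\bigl(\qint{\ell_1}\qint{\ell_2}\bigr)$ is in general not even a polynomial (for $\ell_1=\ell_2=2$ it is $(1+q^2)/(1+q)$), so one would need to match $\qint{\ell_1}\qint{\ell_2}$ weighted copies of one side against $\qint{\ell_1+\ell_2}$ copies of the other --- nothing a tile-by-tile analysis at the swap column will produce. You have also misidentified your identity with the one treated in Section~\ref{sec:anoth-proof-prop}: Proposition~\ref{prop:identity} is equivalent to Proposition~\ref{prop:two}, which compares the concatenation $\Delta_{n_1}*\Delta_{n_2}$ with $\Delta_{n_1+n_2}$, not two paths differing by a single adjacent $DU\leftrightarrow UD$ swap, so that section does not supply your missing step.

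For contrast, the paper never performs local swaps. It inducts on $n$: if $\lambda=\lambda_1*\lambda_2$ decomposes, Lemma~\ref{lem:sep} factors $B_q(\lambda;a,b)=\sum_i B_q(\lambda_1;a,i)B_q(\lambda_2;i,b)$ and the induction hypothesis together with Proposition~\ref{prop:two} (proved by an explicit $q$-Chu--Vandermonde computation for $\Delta_{n_1,\dots,n_k}$ in Steps 1--2) closes the case; if $\lambda$ is indecomposable, Lemma~\ref{lem:lambda-} peels off the outermost chord, expressing $B_q(\lambda;a,b)$ through $B_q(\lambda^-;a-i-r,b-i-s)$, and comparing the same recursion for $\Delta_n$ versus $\Delta_{n-1}$ produces exactly the factor $\qint{n}$ of the removed chord. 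This is precisely why the parameters $a,b$ were introduced: they make the peeling recursion close on itself. To salvage your swap-based plan you would need an independent proof of the local exchange identity, which in practice forces you back to a decomposition of this kind or to the far more elaborate bijective machinery of \cite{KMPW}.
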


Note that if $a=b=0$ in Theorem~\ref{thm:gen}, we obtain Theorem~\ref{thm:1r}.
Although it is not necessary for our purpose, it is possible to find a formula
for $B_q(\Delta_n; a,b)$, see \eqref{eq:14}.

We will prove Theorem~\ref{thm:gen} in the next section. For the rest of this
section we prove several lemmas which are needed in the next section.

\begin{lem}\label{lem:layer}
 For $T\in\TD(\lm)$, every tile in $T$ lies between
$\lambda+(-i+1,i-1)$ and $\lambda+(-i,i)$ for some $i\geq0$. 
\end{lem}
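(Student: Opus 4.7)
The plan is to iteratively apply the cover-inclusive property of Definition~\ref{defn:TD} to push $\eta$ southeastward through $\lm$ until it leaves $\lm$ by crossing $\lambda$. For a tile $\eta\in T$, let $i$ be the smallest positive integer such that $(\eta + (i,-i))\cap \lm = \emptyset$; since $\lm$ is bounded, such an $i$ exists.

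I would first show by induction on $k$ that for each $0\le k\le i-1$ there is a tile $\eta^{(k)}\in T$ with $\eta + (k,-k)\subseteq \eta^{(k)}$. The base case $k=0$ is $\eta$ itself. For the inductive step, $\eta^{(k)}+(1,-1)\supseteq \eta + (k+1,-k-1)$, and the latter intersects $\lm$ by minimality of $i$, so the cover-inclusive property applied to $\eta^{(k)}$ produces a tile $\eta^{(k+1)}\in T$ containing $\eta^{(k)}+(1,-1)$, and hence containing $\eta + (k+1,-k-1)$.

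Taking $k=i-1$ in this chain gives $\eta + (i-1,-i+1)\subseteq \lm$, so every cell of $\eta + (i-1,-i+1)$ lies above $\lambda$; translating by $(-i+1,i-1)$ shows $\eta$ lies above $\lambda + (-i+1,i-1)$. For the opposite bound I would argue that every cell of $\eta + (i,-i)$ lies below $\lambda$. Since $\eta\subseteq \lm$, each cell of $\eta$ is below $\mu$, and since $\mu$ is a monotone lattice path consisting of steps $(0,1)$ and $(1,0)$, the region strictly below $\mu$ is closed under the translation $(1,-1)$; iterating shows every cell of $\eta + (i,-i)$ is below $\mu$. Combined with $(\eta + (i,-i))\cap \lm = \emptyset$, this forces each such cell to be below $\lambda$, so $\eta$ lies below $\lambda + (-i,i)$, as required.

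The main obstacle I anticipate is packaging the monotonicity fact --- that shifting a cell by $(1,-1)$ preserves the property of being below $\mu$ --- cleanly enough to chain with the inductive cover-inclusive argument without drowning in coordinate bookkeeping. Apart from that, the proof is a direct iteration of the defining property of truncated Dyck tilings, with $i$ read off as the length of the chain of tiles built by repeated southeast translation.
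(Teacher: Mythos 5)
Your proof is correct, and it is exactly the argument the paper has in mind: the paper's own ``proof'' is the single sentence that the lemma follows easily from the definition, and your iteration of the cover-inclusive condition (building the chain $\eta^{(k)}$ to get the lower bound $\lambda+(-i+1,i-1)$, and using the minimality of $i$ together with the fact that southeast translation preserves being weakly below $\mu$ and inside the slope~$-1$ strip to get the upper bound $\lambda+(-i,i)$) is the natural way to fill in that omitted detail. No gaps.
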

\begin{proof}
  This lemma easily follows from the definition of truncated Dyck tilings.
\end{proof}

\begin{lem}\label{lem:split}
  Given Dyck paths $\lambda_1$, $\lambda_2$, $\lambda=\lambda_1*\lambda_2$, and
  lattice paths $\mu_1\in L(\lambda_1;a,i)$, $\mu_2\in L(\lambda_2;i,b)$, and
  $\mu=\mu_1*\mu_2$, there is a bijection
\[
\phi:\TD(\lm)\to \TD(\lambda_1/\mu_1)\times\TD(\lambda_2/\mu_2)
\]
such that if $\phi(T)=(T_1,T_2)$, then $\|T\|=\|T_1\|+\|T_2\|$.
\end{lem}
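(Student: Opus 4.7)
\emph{Proof plan:} The antidiagonal $\ell=\{(x,y):x+y=2n_1\}$ passes through both the concatenation point $P=(n_1,n_1)$ of $\lambda=\lambda_1*\lambda_2$ and the concatenation point $P'=(n_1-i,n_1+i)$ of $\mu=\mu_1*\mu_2$. The segment $PP'\subseteq\ell$ divides $\lm$ exactly into $\lambda_1/\mu_1$ (the side with $x+y<2n_1$) and $\lambda_2/\mu_2$ (the side with $x+y>2n_1$). Given $T\in\TD(\lm)$, I would define $\phi(T)=(T_1,T_2)$ by sending each tile that lies entirely on one side into the corresponding $T_j$ and by cutting any tile that straddles $\ell$ along this segment into two pieces, one for each side.

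The crux is showing that the two pieces of a straddling tile are themselves valid truncated Dyck tiles. By Lemma~\ref{lem:layer} every tile of $T$ lies in a single layer strip between $\lambda+(-k+1,k-1)$ and $\lambda+(-k,k)$ for some $k\ge1$. Tracing the cells of this strip, one checks that it is itself a Dyck-tile shape of length $2n$ whose cell-center Dyck path has the same step sequence as $\lambda=\lambda_1*\lambda_2$, and that the unique cell whose center lies on $\ell$ (the \emph{pinch} of the layer) occupies position $2n_1$ in this Dyck path. Since $\lambda_1$ is itself a Dyck path of length $2n_1$, the height of the layer's Dyck path at position $2n_1$ is $0$. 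A tile $\eta$ in this layer corresponds to a sub-ribbon whose center path is a sub-Dyck-path from a height-$0$ index $s$ to a height-$0$ index $e$; if $\eta$ straddles $\ell$ then $s\le 2n_1\le e$, so splitting at index $2n_1$ produces two valid sub-Dyck-paths. Geometrically this cut runs along the NW-SE diagonal of the pinch cell (which lies on $\ell$), so the SW triangular half of the pinch becomes the truncated NE-most cell of the left piece and the NE triangular half becomes the truncated SW-most cell of the right piece, giving two truncated Dyck tiles in the sense of Definition~\ref{defn:TD}.

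Given this, $T_1\in\TD(\lambda_1/\mu_1)$ and $T_2\in\TD(\lambda_2/\mu_2)$ follow because the shift $(1,-1)$ preserves $x+y$ and hence which side of $\ell$ a cell lies on: the cover-inclusive condition is inherited directly from $T$, and the cut creates no new slope-$(-1)$ adjacencies within either side. The identity $\|T\|=\|T_1\|+\|T_2\|$ holds because a straddling tile of half-length $k$ is split into two pieces whose sub-Dyck-path half-lengths add to $k$, while non-straddling tiles contribute unchanged. The inverse $\phi^{-1}$ glues back any pair $(\eta_1,\eta_2)\in T_1\times T_2$ whose NE-most and SW-most triangular cells are respectively the two halves of a common pinch cell; such gluings are forced because otherwise placing $\eta_1$ and $\eta_2$ side by side would create a forbidden shared slope-$(-1)$ border along $\ell$. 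The main obstacle is the structural claim that the layer strip inherits the decomposition $\lambda=\lambda_1*\lambda_2$ in its Dyck path with the pinch at height $0$; once that is in hand, the cut-and-glue description of $\phi$ is essentially forced.
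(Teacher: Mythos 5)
Your proof is correct and follows essentially the same route as the paper's: cut along the antidiagonal through $(n_1,n_1)$, and use Lemma~\ref{lem:layer} together with the fact that $\lambda$ returns to the line $y=x$ at the concatenation point to conclude that any straddling tile is cut at a cell of minimal height in its layer, so both halves are again truncated Dyck tiles. (Your blanket claim that every tile's center path runs between height-$0$ indices of its layer's path is an overstatement for non-straddling tiles, but you only use it for straddling ones, where it does follow from the path passing through the height-$0$ pinch.)
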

\begin{proof}
  We can find such a bijection $\phi$ naturally as follows. Suppose
  $\lambda_1\in\Dyck(2n_1)$ and $\lambda_2\in\Dyck(2n_2)$.  Let $O=(0,0),
  N=(n_1+n_2,n_1+n_2), A=O+(-a,a), B=N+(-b,b), P=(n_1,n_1), Q=P+(-i,i)$. For
  $T\in\TD(\lm)$, define $\phi(T)=(T_1,T_2)$ where $T_1$ and $T_2$ are the
  tilings of $\lambda_1/\mu_1$ and $\lambda_2/\mu_2$ obtained from $T$ by
  cutting the tiles of $T$ with the segment $PQ$, see Figure~\ref{fig:phi}. We
  need to show that $T_1$ and $T_2$ are truncated Dyck tilings of
  $\lambda_1/\mu_1$ and $\lambda_2/\mu_2$. Since the two conditions in
  Definition~\ref{defn:TD} are obvious, it is enough to show that each tile is a
  truncated Dyck tile.

  Suppose $\eta\in T$. If $\eta$ is not divided by the segment $PQ$, it is a
  truncated Dyck tile in $T_1$ or $T_2$. Otherwise, $\eta$ is divided into two
  tiles $\eta_1\in T_1$ and $\eta_2\in T_2$. Let $s_1$ and $s_2$ be the
  southwest cell and the northeast cell of $\eta$ respectively, and $s$ the cell
  where $\eta$ is divided by the segment $PQ$, see Figure~\ref{fig:eta}. In
  order to prove that $\eta_1$ and $\eta_2$ are truncated Dyck tiles, it
  suffices to show that $\HT(s)=\HT(s_1)$, where $\HT(s)$ is the distance
  between $s$ and the line $y=x$. Since $\eta$ is a truncated Dyck tile, we have
  $\HT(s)\geq\HT(s_1)$. On the other hand, by Lemma~\ref{lem:layer}, $\eta$ lies
  between $\lambda+(-i+1,i-1)$ and $\lambda+(-i,i)$ for some $i\geq0$. Since
  $\lambda$ touches the line $y=x$ at $P$, the cell $s$ has the minimal height
  among all cells between $\lambda+(-i+1,i-1)$ and $\lambda+(-i,i)$. Thus
  $\HT(s)\leq \HT(s_1)$, and we get $\HT(s)= \HT(s_1)$. This proves that
  $(T_1,T_2)\in \TD(\lambda_1/\mu_1)\times\TD(\lambda_2/\mu_2)$. Conversely, for
  such a pair $(T_1,T_2)$ we can construct $T$ by taking the union of $T_1$ and
  $T_2$ and attaching each two tiles if they share a border on the segment
  $PQ$. Thus $\phi$ is a bijection.  If $\phi(T)=(T_1,T_2)$, we clearly have
  $\|T\|=\|T_1\|+\|T_2\|$.
\end{proof}

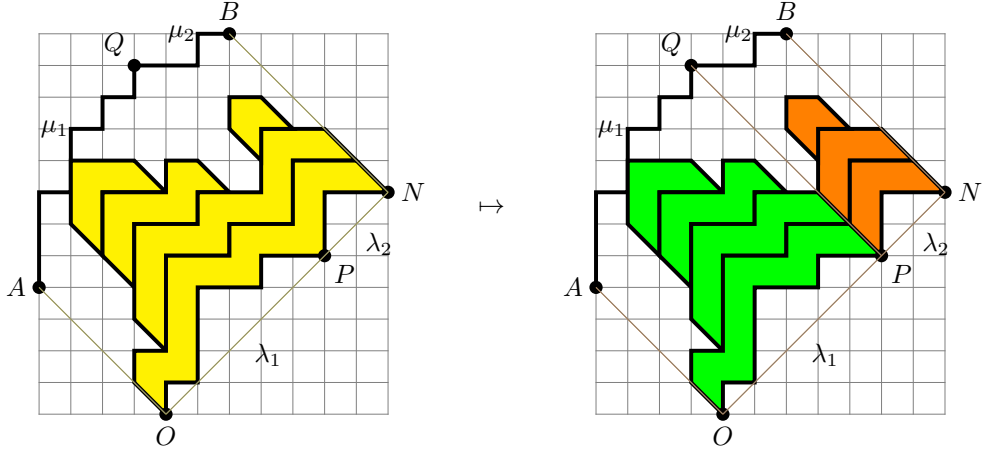
\begin{figure}
  \centering
  \begin{pspicture}(-1,-1)(12,13)
\psgrid(0,0)(11,12)
\psline(0,4)(0,7)(1,7)(1,9)(2,9)(2,10)(3,10)(3,11)(5,11)(5,12)(6,12)
\psset{fillstyle=solid, fillcolor=yellow}
\pspolygon(1,8)(3,8)(4,7)(2,7)(2,5)(1,6)(1,8)
\pspolygon(3,4)(2,5)(2,7)(4,7)(4,8)(5,8)(6,7)(5,7)(5,6)(3,6)(3,4)
\pspolygon(4,2)(3,3)(3,6)(5,6)(5,7)(7,7)(7,9)(9,9)(10,8)(8,8)(8,6)(6,6)(6,5)(4,5)(4,2)
\pspolygon(10,8)(8,8)(8,6)(6,6)(6,5)(4,5)(4,2)(3,2)(3,1)(4,0)(4,1)(5,1)(5,4)(7,4)(7,5)(9,5)(9,7)(11,7)(10,8)
\pspolygon(6,10)(7,10)(8,9)(7,9)(7,8)(6,9)(6,10)
\psdot(4,0)
\psdot(11,7)
\psdot(6,12)
\psdot(9,5)
\psdot(3,11)
\psdot(0,4)
\uput[270](4,0){$O$}
\uput[-45](9,5){$P$}
\uput[0](11,7){$N$}
\uput[90](6,12){$B$}
\uput[135](3,11){$Q$}
\uput[180](0,4){$A$}
\uput[-45](6.5,2.5){$\lambda_1$}
\uput[-45](10,6){$\lambda_2$}
\rput(.5, 9){$\mu_1$}
\rput(4.5, 12){$\mu_2$}
\psset{linewidth=.1pt,linecolor=gray}
\psline(4,0)(11,7)
\psline(4,0)(0,4)
\psline(11,7)(6,12)
\end{pspicture}
  \begin{pspicture}(0,-1)(4,13)
\rput(2,6.5){$\mapsto$}
\end{pspicture}
  \begin{pspicture}(-1,-1)(12,13)
\psgrid(0,0)(11,12)
\psline(0,4)(0,7)(1,7)(1,9)(2,9)(2,10)(3,10)(3,11)(5,11)(5,12)(6,12)
\psset{fillstyle=solid, fillcolor=green}
\pspolygon(1,8)(3,8)(4,7)(2,7)(2,5)(1,6)(1,8)
\pspolygon(3,4)(2,5)(2,7)(4,7)(4,8)(5,8)(6,7)(5,7)(5,6)(3,6)(3,4)
\pspolygon(4,2)(3,3)(3,6)(5,6)(5,7)(7,7)(8,6)(6,6)(6,5)(4,5)(4,2)
\pspolygon (8,6)(6,6)(6,5)(4,5)(4,2)(3,2)(3,1)(4,0)(4,1)(5,1)(5,4)(7,4)(7,5)(9,5)(8,6)
\psset{fillstyle=solid, fillcolor=orange}
\pspolygon(7,7)(7,9)(9,9)(10,8)(8,8)(8,6)(7,7)
\pspolygon(10,8)(8,8)(8,6)(9,5)(9,7)(11,7)(10,8)
\pspolygon(6,10)(7,10)(8,9)(7,9)(7,8)(6,9)(6,10)
\psdot(4,0)
\psdot(11,7)
\psdot(6,12)
\psdot(9,5)
\psdot(3,11)
\psdot(0,4)
\uput[270](4,0){$O$}
\uput[-45](9,5){$P$}
\uput[0](11,7){$N$}
\uput[90](6,12){$B$}
\uput[135](3,11){$Q$}
\uput[180](0,4){$A$}
\uput[-45](6.5,2.5){$\lambda_1$}
\uput[-45](10,6){$\lambda_2$}
\rput(.5, 9){$\mu_1$}
\rput(4.5, 12){$\mu_2$}

\psset{linewidth=.1pt,linecolor=gray}
\psline(4,0)(11,7)
\psline(4,0)(0,4)
\psline(11,7)(6,12)
\psline(9,5)(3,11)
\end{pspicture}
\caption{The definition of the map $\phi$.}
  \label{fig:phi}
\end{figure}

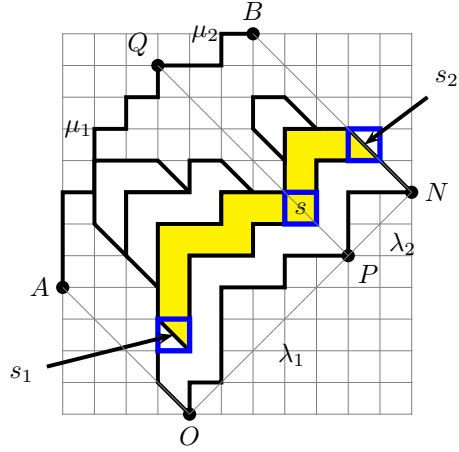
\begin{figure}
  \centering
  \begin{pspicture}(-1,-1)(12,13)
\psgrid(0,0)(11,12)
\psline(0,4)(0,7)(1,7)(1,9)(2,9)(2,10)(3,10)(3,11)(5,11)(5,12)(6,12)
\psset{fillstyle=solid}
\pspolygon(1,8)(3,8)(4,7)(2,7)(2,5)(1,6)(1,8)
\pspolygon(3,4)(2,5)(2,7)(4,7)(4,8)(5,8)(6,7)(5,7)(5,6)(3,6)(3,4)
\pspolygon[fillcolor=yellow](4,2)(3,3)(3,6)(5,6)(5,7)(7,7)(7,9)(9,9)(10,8)(8,8)(8,6)(6,6)(6,5)(4,5)(4,2)
\pspolygon(10,8)(8,8)(8,6)(6,6)(6,5)(4,5)(4,2)(3,2)(3,1)(4,0)(4,1)(5,1)(5,4)(7,4)(7,5)(9,5)(9,7)(11,7)(10,8)
\pspolygon(6,10)(7,10)(8,9)(7,9)(7,8)(6,9)(6,10)

\psset{fillstyle=none}
\psframe[linewidth=2pt, linecolor=blue,dimen=middle](3,2)(4,3)
\psframe[linewidth=2pt, linecolor=blue,dimen=middle](7,6)(8,7)
\psframe[linewidth=2pt, linecolor=blue,dimen=middle](9,8)(10,9)
\rput(7.5,6.5){$s$}
\psline{->}(-.5,1.5)(3.5,2.5)
\uput[200](-.5,1.5){$s_1$}
\psline{->}(11.5,10)(9.5,8.5)
\uput[50](11.5,10){$s_2$}

\psdot(4,0)
\psdot(11,7)
\psdot(6,12)
\psdot(9,5)
\psdot(3,11)
\psdot(0,4)
\uput[270](4,0){$O$}
\uput[-45](9,5){$P$}
\uput[0](11,7){$N$}
\uput[90](6,12){$B$}
\uput[135](3,11){$Q$}
\uput[180](0,4){$A$}
\uput[-45](6.5,2.5){$\lambda_1$}
\uput[-45](10,6){$\lambda_2$}
\rput(.5, 9){$\mu_1$}
\rput(4.5, 12){$\mu_2$}
\psset{linewidth=.1pt,linecolor=gray}
\psline(4,0)(11,7)
\psline(4,0)(0,4)
\psline(11,7)(6,12)
\psline(9,5)(3,11)
\end{pspicture}
 \caption{The truncated Dyck tile $\eta$ and the cells $s,s_1,s_2$.}
  \label{fig:eta}
\end{figure}%

Using Lemma~\ref{lem:split} one can easily obtain the following lemma.

\begin{lem}\label{lem:sep}
We have
\[
B_q(\lambda_1*\lambda_2; a,b) = \sum_{i\ge0} B_q(\lambda_1; a,i) B_q(\lambda_2; i,b).
\]
\end{lem}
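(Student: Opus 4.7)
The plan is to decompose the sum defining $B_q(\lambda_1*\lambda_2; a,b)$ according to where the upper path $\mu$ crosses the antidiagonal through the splitting point, and then apply Lemma~\ref{lem:split} to factor the resulting contributions.

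First I would observe that if $\lambda_1\in\Dyck(2n_1)$ and we set $P=(n_1,n_1)$, then $\lambda=\lambda_1*\lambda_2$ touches the line $y=x$ at $P$. Consequently, any lattice path $\mu\in L(\lambda;a,b)$ must meet the antidiagonal through $P$ at a unique lattice point $Q=P+(-i,i)$ for some $i\ge0$, since $\mu$ lies weakly above $\lambda$. This point $Q$ cuts $\mu$ into two sublattice paths $\mu_1$ (from $A=(-a,a)$ to $Q$) and $\mu_2$ (from $Q$ to $B=N+(-b,b)$), with $\mu_1\in L(\lambda_1;a,i)$ and $\mu_2\in L(\lambda_2;i,b)$, and this decomposition is a bijection between $L(\lambda;a,b)$ and $\bigsqcup_{i\ge0} L(\lambda_1;a,i)\times L(\lambda_2;i,b)$. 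Clearly $|\lm|=|\lambda_1/\mu_1|+|\lambda_2/\mu_2|$ from this decomposition.

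Next, for each fixed pair $(\mu_1,\mu_2)$ I would invoke Lemma~\ref{lem:split} to get a bijection $\phi:\TD(\lm)\to\TD(\lambda_1/\mu_1)\times\TD(\lambda_2/\mu_2)$ satisfying $\|T\|=\|T_1\|+\|T_2\|$ whenever $\phi(T)=(T_1,T_2)$. Combining this bijection with the area decomposition above, the weight $q^{|\lm|-\|T\|}$ factors as $q^{|\lambda_1/\mu_1|-\|T_1\|}\cdot q^{|\lambda_2/\mu_2|-\|T_2\|}$. Summing yields
\begin{align*}
B_q(\lambda_1*\lambda_2;a,b)
&=\sum_{i\ge0}\,\sum_{\mu_1\in L(\lambda_1;a,i)}\,\sum_{\mu_2\in L(\lambda_2;i,b)}\,\sum_{T\in\TD(\lm)} q^{|\lm|-\|T\|} \\
&=\sum_{i\ge0}\,\sum_{\mu_1,\mu_2}\,\sum_{(T_1,T_2)} q^{|\lambda_1/\mu_1|-\|T_1\|}\,q^{|\lambda_2/\mu_2|-\|T_2\|} \\
&=\sum_{i\ge0} B_q(\lambda_1;a,i)\,B_q(\lambda_2;i,b),
\end{align*}
which is the desired identity.

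There is no real obstacle here, since the combinatorial content is already concentrated in Lemma~\ref{lem:split}. The only thing to be careful about is justifying that the splitting of $\mu$ at the antidiagonal through $P$ is well-defined and exhausts every $\mu\in L(\lambda;a,b)$; this is immediate because $\lambda$ itself passes through $P$ and $\mu$ lies weakly above $\lambda$, forcing $\mu$ to cross the antidiagonal through $P$ at exactly one lattice point.
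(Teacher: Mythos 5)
Your proof is correct and follows exactly the route the paper intends: the paper simply states that Lemma~\ref{lem:sep} follows easily from Lemma~\ref{lem:split}, and your argument — uniquely splitting $\mu$ at the antidiagonal through the concatenation point (valid since each unit step increases $x+y$ by one), noting that areas add, and invoking the bijection $\phi$ to factor the tiling weights — is precisely the omitted routine verification.
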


% \begin{proof}
%   Suppose $\lambda_1\in\Dyck(2n_1)$ and $\lambda_2\in\Dyck(2n_2)$, and let
%   $\lambda=\lambda_1*\lambda_2$.  For $\mu\in L(\lambda;a,b)$, there is a unique
%   $i$ such that $\mu$ passes through $(n_1,n_1)+(-i,i)$. Let $L(\lambda;a,i,b)$ be
%   the set of $\mu\in L(\lambda;a,b)$ passing through $(n_1,n_1)+(-i,i)$.  Then
% \begin{equation}
%   \label{eq:12}
% B_q(\lambda;a,b) = \sum_{i\ge0} \sum_{\mu\in L(\lambda;a,i,b)}
% \sum_{T\in \TD(\lm)} q^{|\lm|-\|T\|}.
% \end{equation}

% Now we fix $\mu\in L(\lambda;a,i,b)$.  Let $O=(0,0), N=(n_1+n_2,n_1+n_2),
% A=O+(-a,a), B=N+(-b,b), P=(n_1,n_1), Q=P+(-i,i)$. Let $\mu_1$ (resp.~$\mu_2$) be
% the sub-path of $\mu$ from $A$ to $Q$ (resp.~from $Q$ to $B$), see
% Figure~\ref{fig:phi}.  Then $\mu_1\in L(\lambda_1;a,i)$, $\mu_2\in
% L(\lambda_2;i,b)$, and $\mu$ is determined by $\mu_1$ and $\mu_2$.

% Since $|\lm|=|\lambda_1/\mu_1|+|\lambda_2/\mu_2|$, by Lemma~\ref{lem:split}, we
% have
% \begin{equation}
%   \label{eq:11}
% \sum_{T\in \TD(\lm)} q^{|\lm|-\|T\|}  
% =\sum_{T_1\in \TD(\lambda_1/\mu_1)} q^{|\lambda_1/\mu_1|-\|T_1\|}
% \sum_{T_2\in \TD(\lambda_2/\mu_2)} q^{|\lambda_2/\mu_2|-\|T_2\|}.
% \end{equation}
% By \eqref{eq:12} and \eqref{eq:11}, we get
% \begin{align*}
% B_q(\lambda;a,b) &= \sum_{i\ge0} 
% \sum_{\substack{\mu_1\in L(\lambda_1;a,i)\\ \mu_2\in L(\lambda_2;i,b)}}
% \sum_{T_1\in \TD(\lambda_1/\mu_1)} q^{|\lambda_1/\mu_1|-\|T_1\|}
% \sum_{T_2\in \TD(\lambda_2/\mu_2)} q^{|\lambda_2/\mu_2|-\|T_2\|}\\
% &= \sum_{i\ge0} B_q(\lambda_1; a,i) B_q(\lambda_2; i,b),
% \end{align*}
% which finishes the proof.
% \end{proof}

We use the standard notations for $q$-binomial coefficients:
\[
\qbinom{n}{k} = \frac{\qint{n}!}{\qint{k}!\qint{n-k}!},
\qquad \qbinom{n_1+\cdots+n_k}{n_1,\dots,n_k} 
= \frac{\qint{n_1+\cdots+n_k}!}{\qint{n_1}!\cdots\qint{n_k}!}.
\]

\begin{lem}\label{lem:tile}
  Let $\mu$ be a lattice path in $L(\Delta_n; a,b)$ passing through $P+(-t,t)$
  for some integer $t\ge0$, where $P=(0,n)$, the peak of $\Delta_n$.  Then
\[
\sum_{T\in\TD(\Delta_{n}/\mu)}
q^{\|T\|} =\qbinom{n+t}{n}.
\]  
\end{lem}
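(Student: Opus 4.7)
My plan is to analyze the layer structure of truncated Dyck tilings of $\Delta_n/\mu$ directly. By Lemma~\ref{lem:layer}, every tile of $T \in \TD(\Delta_n/\mu)$ lies in some layer $L_i$ between $\Delta_n + (-i+1, i-1)$ and $\Delta_n + (-i, i)$, which for $i \ge 1$ is a $\Lambda$-shaped strip consisting of $n$ cells along each leg together with a single corner cell, for a total of $2n+1$ cells. The first key observation is that only the layers $L_1, \ldots, L_t$ can contain any tile of positive length: for $i > t$ the corner cell of $L_i$'s outer boundary sits at $y \ge n+t$ and $x \le -t$, which by the monotonicity of $\mu$ through $R = (-t, n+t)$ lies strictly above $\mu$ and hence outside $\Delta_n/\mu$, and any positive-length tile in $L_i$ must contain this corner cell (as discussed below).

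The second observation is that in each layer $L_i$ with $1 \le i \le t$, the cells of the $\Lambda$ traced in order form a $\Delta_n$-skeleton $U^n R^n$, and the only Dyck sub-paths of $U^n R^n$ are the paths $\Delta_k$ for $0 \le k \le n$, each centered at the corner. Hence each $L_i$ admits at most one tile, parameterized by $k_i \in \{0, 1, \ldots, n\}$, with $k_i = 0$ meaning no tile. A tracking argument for the level function $\max(-x, y-n)$ along $\mu$ (right steps decrease or preserve level, up steps preserve or increase level, and $\mu$ is pinned to level $t$ at $R$) shows that $\mu$ stays at level $\ge t$ throughout; equivalently, $\mu$ lies on or above $\Delta_n + (-t, t)$, so each of $L_1, \ldots, L_t$ is entirely contained in $\Delta_n/\mu$, and every choice of $k_i \in \{0, 1, \ldots, n\}$ is realizable.

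The third observation is that the cover-inclusive condition of Definition~\ref{defn:TD} translates to the weakly decreasing constraint $k_1 \ge k_2 \ge \cdots \ge k_t$: the shift $(1, -1)$ carries a corner-centered $\Delta_{k_i}$ tile in $L_i$ to the corresponding corner-centered position in $L_{i-1}$, so the definition requires a tile $\Delta_{k_{i-1}} \supseteq \Delta_{k_i}$ in $L_{i-1}$, forcing $k_{i-1} \ge k_i$. Since $\|T\| = k_1 + \cdots + k_t$, the generating function becomes
\[
\sum_{T \in \TD(\Delta_n/\mu)} q^{\|T\|}
= \sum_{n \ge k_1 \ge \cdots \ge k_t \ge 0} q^{k_1 + \cdots + k_t}
= \qbinom{n+t}{n},
\]
the right-hand side being the standard generating function for partitions fitting in an $n \times t$ rectangle. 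The main obstacle I anticipate is justifying the level-tracking claim for $\mu$ (to ensure each layer $L_i$ with $i \le t$ is fully available) and confirming that no Dyck sub-skeleton of $U^n R^n$ other than $\Delta_k$ exists, so that each tile is forced to be corner-centered of the asserted shape.
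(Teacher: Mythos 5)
Your argument is correct and follows essentially the same route as the paper's proof: decompose the tiling into the diagonal layers of Lemma~\ref{lem:layer}, observe that each layer carries at most one (corner-centered) tile because $\Delta_n$ has a single peak, and identify the sequence of half-lengths with a partition in a $t\times n$ box, whose generating function is $\qbinom{n+t}{n}$. You supply some verifications the paper leaves implicit (emptiness of layers beyond $L_t$, realizability of every weakly decreasing sequence, and the translation of cover-inclusiveness into $k_1\ge\cdots\ge k_t$), all of which check out.
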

\begin{proof}
  Let $T\in\TD(\Delta_{n}/\mu)$. By Lemma~\ref{lem:layer}, every tile $\eta$ in
  $T$ lies between $\Delta_{n}+(-i+1,i-1)$ and $\Delta_{n}+(-i,i)$ for some
  $i\in[t]$. Moreover, $\eta$ is the unique tile between
  $\Delta_{n}+(-i+1,i-1)$ and $\Delta_{n}+(-i,i)$ because $\Delta_n$ has only
  one peak.

  For $i\in[t]$, let $h_i$ be the half-length of the tile in $T$ between
  $\Delta_{n}+(-i+1,i-1)$ and $\Delta_{n}+(-i,i)$. If there is no such tile, we
  define $h_i=0$. Then $\nu=(h_1,\dots,h_t)$ is a partition contained in a
  $t\times n$ box, and $\|T\|=|\nu|$, see Figure~\ref{fig:tpartition}. This gives
  a bijection between $\TD(\Delta_{n}/\mu)$ and the set of partitions contained
  in a $t\times n$ box. It is well-known that the sum of $q^{|\nu|}$ for such
  partitions $\nu$ is equal to the right hand side, see
  \cite[1.7.3~Proposition]{EC1}.

 \begin{figure}
    \centering
\psset{unit=20pt}
\begin{pspicture}(0,1)(6,7) 
\psgrid[subgriddiv=2](0,1)(6,7)
\psline(3,1)(3,4)(6,4)
\psline[linewidth=.1pt, linecolor=gray](3,1)(6,4)
\psline(6,4)(3,7)
\psline(0,4)(3,1)
\rput(1,-1){\psline(-1,5)(-1,6.5)(-.5,6.5)(-.5,7)(0,7)
\psline(0,7)(0,7.5)(1,7.5)(1,8)(2,8)}
\uput[135](1,6){$P+(-t,t)$}
\uput[-45](3,4){$P$}
\rput(4,3){$\Delta_{n}$}
{
\psset{fillstyle=solid,fillcolor=yellow,unit=10pt}
\rput(6,8){\pspolygon(0,0)(0,-5)(-1,-4)(-1,1)(4,1)(5,0)}
\rput(5,9){\pspolygon(0,0)(0,-4)(-1,-3)(-1,1)(3,1)(4,0)}
\rput(4,10){\pspolygon(0,0)(0,-2)(-1,-1)(-1,1)(1,1)(2,0)}
\psset{linecolor=blue}
\rput(6,8){\psline(0,0)(6,0)}
\rput(5,9){\psline(0,0)(6,0)}
\rput(4,10){\psline(0,0)(6,0)}
\rput(3,11){\psline(0,0)(6,0)}
\rput(2,12){\psline(0,0)(6,0)}
\rput(6,8){\psline(0,0)(-4,4)}
\rput(7,8){\psline(0,0)(-4,4)}
\rput(8,8){\psline(0,0)(-4,4)}
\rput(9,8){\psline(0,0)(-4,4)}
\rput(10,8){\psline(0,0)(-4,4)}
\rput(11,8){\psline(0,0)(-4,4)}
\rput(12,8){\psline(0,0)(-4,4)}
}
\psdot(3,4) \psdot(1,6)
\end{pspicture}
\caption{A truncated Dyck tiling of $\Delta_n/\mu$ corresponding to the
  partition $(5,4,2,0)$. Here an additional grid is drawn to visualize the
  partition.}
    \label{fig:tpartition}
  \end{figure}
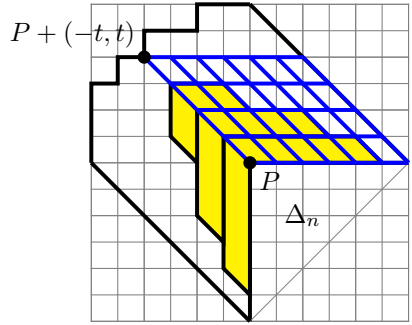

\end{proof}

\section{Proof of Theorem~\ref{thm:gen}}
\label{sec:proof1}

In this section we prove Theorem~\ref{thm:gen} in three steps. In the first
step we prove the theorem in the case $\lambda=\Delta_{n_1,\dots,n_k}$ and
$a=b=0$. In the second step we prove theorem in the case
$\lambda=\Delta_{n_1,\dots,n_k}$, and $a$ and $b$ are arbitrary. In the third
step we prove the theorem without restrictions.

\subsection{Step 1: $\lambda=\Delta_{n_1,\dots,n_k}$ and $a=b=0$.}

In this subsection we prove Theorem~\ref{thm:gen} for
$\lambda=\Delta_{n_1,\dots,n_k}\in\Dyck(2n)$ and $a=b=0$. In other words, we
show that
\begin{equation}
  \label{eq:16}
B_q(\Delta_{n_1,\dots,n_k}; 0,0) = \qbinom{n}{n_1,\dots,n_k}.
\end{equation}

Throughout this subsection $\lambda$ denotes $\Delta_{n_1,\dots,n_k}$ and for
$i\in[k]$, $P_i$ denotes the peak of the $i$th sub-Dyck path $\Delta_{n_i}$,
i.e.
\[
P_i=(n_1+\cdots+n_{i-1}, n_1+\cdots+n_{i}).
\]

Consider a lattice path $\mu\in L(\lambda;0,0)$.  For each $i\in[k]$, we can
find the intersection $Q_i$ of $\mu$ and the line with slope $-1$ passing
through $P_i$. Then we have $Q_i=P_i + (-t_i,t_i)$ for some integer
$t_i\geq0$. Note that $t_1=t_{k}=0$.  We define $L'(\lambda;t_1,\dots,t_k)$ to
be the set of such lattice paths $\mu$.  Then,
\begin{equation}
  \label{eq:3}
B_q(\lambda;0,0) = 
\sum_{\substack{t_1,\dots,t_k\ge0 \\ t_1=t_k=0}}
\sum_{\mu\in L'(\lambda;t_1,\dots,t_k)}
q^{|\lm|}
\sum_{T\in\TD(\lm)}
q^{-\|T\|}.
\end{equation}

\begin{figure}
  \centering
\psset{unit=15pt}
\begin{pspicture}(0,0)(9,10)
\psline(0,0)(0,2)(2,2)(2,5)(5,5)(5,7)(7,7)(7,9)(9,9)

\psline[linecolor=red](0,0)(0,3)(.5,3)(.5,3.5)
\psline[linecolor=blue](.5,3.5)(.5,4)(1,4)(1,6)(3,6)(3,7)
\psline[linecolor=orange](3,7)(3,8)(5.5,8)(5.5,8.5)
\psline[linecolor=gray](5.5,8.5)(6.5,8.5)(6.5,9)(9,9)

\rput(-.5,3){$\mu_1$}
\rput(1.5,6.5){$\mu_2$}
\rput(3,8.5){{$\mu_3$}}
\rput(6,9.5){{$\mu_4$}}

\psdot(0,2) \psdot(2,5) \psdot(1,6) \psdot(5,7) \psdot(4,8) \psdot(7,9)
\psset{linewidth=1pt,linecolor=gray}
\psline(0,0)(9,9)
\psline(2,2)(.5,3.5)
\psline(5,5)(3,7)
\psline(7,7)(5.5,8.5)
\psline(2,5)(1,6)
\psline(5,7)(4,8)
\rput[l](.5,1.5){$P_1=Q_1$}
\rput(2.5,4.5){$P_2$} \rput(1.5,5.5){$Q_2$}
\rput(5.5,6.5){$P_3$} \rput(4.5,7.5){$Q_3$}
\rput[l](7.5,8.5){$P_4=Q_4$}
\rput(1.5,3){$\ell_1$}
\rput[bl](4,6){$\ell_2$}
\rput(6.5,8){$\ell_3$}
\end{pspicture}
 \caption{Dividing $\mu$ into $k$ sub-paths for $k=4$.}
 \label{fig:divmu}
\end{figure}
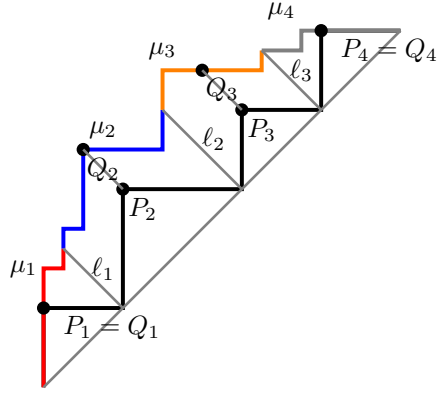

Suppose $\mu\in L'(\lambda;t_1,\dots,t_k)$. For $i\in[k-1]$, let $\ell_i$ be the
line with slope $-1$ passing through $(n_1+\cdots+n_{i} ,n_1+\cdots+n_{i})$, the
ending point of $\Delta_{n_i}$. Let $\mu_1,\dots,\mu_k$ be the paths obtained by
dividing $\mu$ using the lines $\ell_1,\dots,\ell_{k-1}$, see
Figure~\ref{fig:divmu}.  By Lemma~\ref{lem:split}, we have
\[
\sum_{T\in\TD(\lm)} q^{-\|T\|}
= \prod_{i=1}^k\sum_{T\in\TD(\Delta_{n_i}/\mu_i)} q^{-\|T\|}.
\]
Since $\mu_i$ passes through $Q_i=P_i+(-t_i,t_i)$, by Lemma~\ref{lem:tile}, we
have
\[
\sum_{T\in\TD(\Delta_{n_i}/\mu_i)} q^{-\|T\|} = \Qbinom{n_i+t_i}{n_i}{q^{-1}}.
\]
Thus \eqref{eq:3} can be written as
\begin{equation}
  \label{eq:4}
B_q(\lambda;0,0) = 
\sum_{\substack{t_1,\dots,t_k\\ t_1=t_k=0}}
\prod_{i=1}^k \Qbinom{n_i+t_i}{n_i}{q^{-1}}
\sum_{\mu\in L'(\lambda;t_1,\dots,t_k)}
q^{|\lm|}.
\end{equation}

The latter sum in \eqref{eq:4} can be computed as follows. 

\begin{lem}\label{lem:area}
Suppose $t_1=t_k=0$. Then
\[
\sum_{\mu\in L'(\lambda;t_1,\dots,t_k)} q^{|\lm|}
= \prod_{i=1}^{k-1} q^{n_i t_i + n_{i+1}t_{i+1}+ \frac12 (t_i-t_{i+1})^2} 
\qbinom{n_i+n_{i+1}}{n_i+t_i-t_{i+1}}.
\]
\end{lem}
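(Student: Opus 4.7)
The plan is to decompose $\mu$ via its prescribed passage through $Q_1,\dots,Q_k$ and to factor $|\lm|$ relative to a canonical reference path. Since $t_1=t_k=0$ forces $Q_1=P_1$ and $Q_k=P_k$, every $\mu\in L'(\lambda;t_1,\dots,t_k)$ splits into the forced vertical segment from $O$ to $P_1$, the forced horizontal segment from $P_k$ to $N$, and $k-1$ intermediate segments $\mu_i\colon Q_i\to Q_{i+1}$ ($1\le i\le k-1$), each with $\Delta x_i:=n_i+t_i-t_{i+1}$ right steps and $\Delta y_i:=n_{i+1}+t_{i+1}-t_i$ up steps. I will define the reference path $\nu\in L'(\lambda;t_1,\dots,t_k)$ whose $i$-th intermediate segment $R_i$ is the right-then-up staircase from $Q_i$ to $Q_{i+1}$, and set $C:=|\lambda/\nu|$. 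Since $\mu$ and $\nu$ agree outside the intermediate segments and differ in strip $i$ exactly by the region between $\mu_i$ and $R_i$ inside the $\Delta x_i\times\Delta y_i$ rectangle with opposite corners $Q_i,Q_{i+1}$, a direct cell-count gives
\[
|\lm|=C+\sum_{i=1}^{k-1}A(\mu_i),
\]
where $A(\mu_i)$ denotes the number of unit cells enclosed between $\mu_i$ and $R_i$.

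The first step is to verify that the $\mu_i$ may be chosen independently, i.e.\ that every lattice path from $Q_i$ to $Q_{i+1}$ stays above $\lambda$ automatically. Writing $N_i=n_1+\cdots+n_i$, the minimum height along $\mu_i$ is $y(Q_i)=N_i+t_i\ge N_i$, while the maximum height of $\lambda$ over the $x$-interval $[x(Q_i),x(Q_{i+1})]$ is at most $N_i$ (attained at the peak $P_i$ when $P_i$ lies in the interval, and strictly smaller otherwise). This inequality is valid regardless of the size of $t_i$, so no additional constraint is imposed on $\mu_i$. With independence in hand, the standard $q$-binomial area generating function yields
\[
\sum_{\mu_i}q^{A(\mu_i)}=\qbinom{\Delta x_i+\Delta y_i}{\Delta x_i}=\qbinom{n_i+n_{i+1}}{n_i+t_i-t_{i+1}},
\]
and multiplying over $i$ gives $\sum_\mu q^{|\lm|}=q^C\prod_{i=1}^{k-1}\qbinom{n_i+n_{i+1}}{n_i+t_i-t_{i+1}}$.

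It remains to compute $C$ and check it agrees with the exponent in the claimed product. Using the identity ``area under a monotone lattice path equals the sum of heights of its right steps,'' both $\mathrm{area}(\nu)$ and $\mathrm{area}(\lambda)$ are immediate, and subtracting yields
\[
C=\sum_{i=1}^{k-1}\bigl[(n_i+t_i-t_{i+1})(N_i+t_i)-n_iN_i\bigr].
\]
Expanding and applying the Abel-type summation $\sum_{i=1}^{k-1}(t_i-t_{i+1})N_i=\sum_{i=1}^{k-1}n_it_i$ (which uses $t_1=t_k=0$) reduces $C$ to $2\sum_{i=1}^{k-1}n_it_i+\sum_{i=1}^{k-1}(t_i^2-t_it_{i+1})$. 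A short reindexing under the same boundary conditions shows this equals $\sum_{i=1}^{k-1}\bigl[n_it_i+n_{i+1}t_{i+1}+\frac{1}{2}(t_i-t_{i+1})^2\bigr]$, completing the identification. The main obstacle is the bookkeeping in the $C$-calculation and the algebraic matching of the two forms of the exponent; the automatic above-$\lambda$ verification is intuitively clear but requires a brief case check when $t_i$ is large enough that $Q_i$ lies far to the upper-left of $P_i$.
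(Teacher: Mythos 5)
Your proof is correct and follows essentially the same route as the paper: both decompose $\mu$ into the independent segments between the consecutive points $Q_i$, obtain the $q$-binomial factors from the free monotone paths in each rectangle, and account separately for the remaining fixed area. The only difference is that the paper reads off the constant exponent strip by strip from an explicit geometric dissection (two parallelograms and a triangle per strip), whereas you compute it globally as $|\lambda/\nu|$ for a reference path $\nu$ and then match the two forms of the exponent by Abel summation --- a harmless variation in bookkeeping.
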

\begin{proof}
  Let $\mu\in L'(\lambda;t_1,\dots,t_k)$.  Then $\mu$ passes through the points
  $Q_i=P_i+(-t_i,t_i)$ for $i=1,2,\dots,k$.  For $i=1,2,\dots,k-1$, we define
  $\nu_i$ to be the sub-path of $\mu$ from $Q_i$ to $Q_{i+1}$, and $R_i$ to be
  the region bounded by $\nu_i$, $P_iQ_i$, $P_{i+1}Q_{i+1}$ and $\lambda$. Since
  $t_1=t_k=0$, $|\lm|$ is the sum of the areas of $R_1,\dots,R_{k-1}$. We can
  divide the region $R_i$ as shown in Figure~\ref{fig:division}.  In such a
  division, the area of region \textbf{1} (resp.~region \textbf{2}) is $n_it_i$
  (resp.~$n_{i+1}t_{i+1}$). Since region \textbf{3} is an isosceles right
  triangle such that the length of the hypotenuse is $\sqrt2|t_i-t_{i+1}|$, the
  area of region \textbf{3} is equal to $\frac12 (t_1-t_{i+1})^2$.  If we add
  $q$ raised to the area of region \textbf{4} for all possible lattice paths
  $\nu_i$ from $Q_i$ to $Q_{i+1}$, we get
  $\qbinom{n_{i}+n_{i+1}}{n_i+t_i-t_{i+1}}$.  Summing these results we
  obtain the lemma.

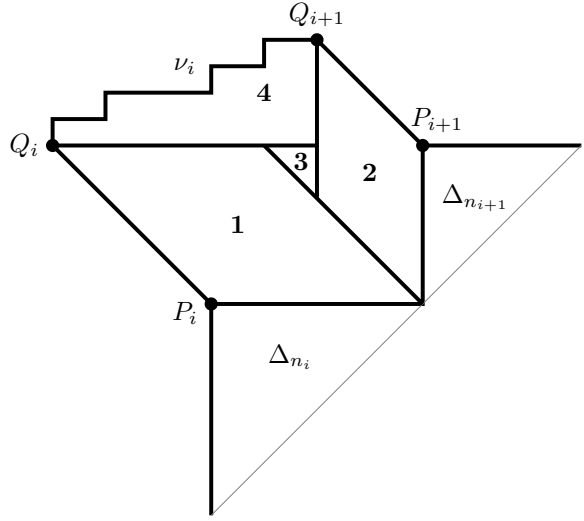
\begin{figure}
  \centering
\psset{unit=20pt}  
\begin{pspicture}(0,0)(10,10)
\psline(3,0)(3,4)(7,4)(7,7)(10,7)
\psline[linewidth=.1pt, linecolor=gray](3,0)(10,7)
\psline(3,4)(0,7) \psline(7,4)(4,7) \psline(7,7)(5,9)
\psline(5,9)(5,6) \psline(0,7)(5,7)
\psline(0,7)(0,7.5)(1,7.5)(1,8)(3,8)(3,8.5)(4,8.5)(4,9)(5,9)
\psdot(0,7) \psdot(5,9) \psdot(3,4) \psdot(7,7)
\uput[180](0,7){$Q_i$}
\uput[90](5,9){$Q_{i+1}$}
\uput[200](3,4){$P_{i}$}
\uput[70](7,7){$P_{i+1}$}
\rput(2.5,8.5){$\nu_i$}
\rput(4.5,3){$\Delta_{n_i}$}
\rput(8,6){$\Delta_{n_{i+1}}$}
\rput(3.5,5.5){\bf 1}
\rput(6,6.5){\bf 2}
\rput(4.7,6.7){\bf 3}
\rput(4,8){\bf 4}
\end{pspicture}
\caption{Dividing the region $R_i$ into four regions.}
  \label{fig:division}
\end{figure}

\end{proof}

Since $t_1=t_k=0$ and $q^{n_it_i}\Qbinom{n_i+t_i}{n_i}{q^{-1}} =
\qbinom{n_i+t_i}{n_i}$, by \eqref{eq:4} and Lemma~\ref{lem:area}, we have
\begin{align*}
B_q(\lambda;0,0) &= 
\sum_{\substack{t_1,\dots,t_k\ge0 \\ t_1=t_k=0}}
\prod_{i=1}^{k-1} q^{n_{i+1}t_{i+1}+ \frac12 (t_i-t_{i+1})^2} 
\qbinom{n_i+t_i}{n_i}
\qbinom{n_i+n_{i+1}}{n_i+t_i-t_{i+1}}\\
&= \sum_{\substack{t_1,\dots,t_k\ge0 \\ t_1=t_k=0}}
\prod_{i=1}^{k-1} q^{t_{i+1}(n_{i+1}+ t_{i+1} - t_i)}
\qbinom{n_i+t_i}{n_i}
\qbinom{n_i+n_{i+1}}{n_i+t_i-t_{i+1}}, 
\end{align*}
where the following equality is used:
\[
\sum_{i=1}^{k-1} \frac12 (t_i-t_{i+1})^2  =
\sum_{i=1}^{k-1} \left(t_{i+1}^2 - t_it_{i+1}\right).
\]

Now \eqref{eq:16} follows from the lemma below.

\begin{lem}\label{lem:formula}
For integers $k\geq1$, and $n_1,\dots,n_{k}\geq0$, we have
\begin{equation}
  \label{eq:18}
\sum_{\substack{t_1,\dots,t_k\ge0 \\ t_1=t_k=0}} \prod_{i=1}^{k-1}
q^{t_{i+1}(n_{i+1} - t_i + t_{i+1})}
\qbinom{n_i+t_i}{n_i} \qbinom{n_i+n_{i+1}}{n_i+t_i-t_{i+1}}
= \qbinom{n_1+\cdots+n_{k}}{n_1,\dots,n_{k}}.
\end{equation}
\end{lem}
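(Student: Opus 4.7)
My plan is to prove Lemma~\ref{lem:formula} by induction on $k$. The base cases $k=1$ and $k=2$ are immediate: when $k=1$ the LHS is an empty product equal to $\qbinom{n_1}{n_1}=1$, and when $k=2$ both sides reduce to $\qbinom{n_1+n_2}{n_1}$.

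For the inductive step I would sum out the innermost index $t_{k-1}$ (using $t_k=0$). The factors involving $t_{k-1}$ are $q^{t_{k-1}(n_{k-1}-t_{k-2}+t_{k-1})}$, $\qbinom{n_{k-2}+n_{k-1}}{n_{k-2}+t_{k-2}-t_{k-1}}$, $\qbinom{n_{k-1}+t_{k-1}}{n_{k-1}}$, and $\qbinom{n_{k-1}+n_k}{n_{k-1}+t_{k-1}}$. The first simplification is a trinomial revision
\[
\qbinom{n_{k-1}+t_{k-1}}{n_{k-1}}\qbinom{n_{k-1}+n_k}{n_{k-1}+t_{k-1}} = \qbinom{n_{k-1}+n_k}{n_{k-1}}\qbinom{n_k}{t_{k-1}},
\]
(both sides equal the $q$-trinomial coefficient $\qint{n_{k-1}+n_k}!/(\qint{n_{k-1}}!\,\qint{t_{k-1}}!\,\qint{n_k-t_{k-1}}!)$), which factors out the $t_{k-1}$-free constant $\qbinom{n_{k-1}+n_k}{n_{k-1}}$ from the inner sum.

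The remaining inner sum
\[
\sum_{j\ge 0} q^{j(n_{k-1}-t_{k-2}+j)}\, \qbinom{n_{k-2}+n_{k-1}}{n_{k-2}+t_{k-2}-j}\qbinom{n_k}{j}
\]
is an instance of $q$-Vandermonde: taking $m=n_{k-2}+n_{k-1}$, $n=n_k$, $c=n_{k-2}+t_{k-2}$ in the form $\sum_j q^{j(m-c+j)}\qbinom{m}{c-j}\qbinom{n}{j}=\qbinom{m+n}{c}$, the sum evaluates to $\qbinom{n_{k-2}+n_{k-1}+n_k}{n_{k-2}+t_{k-2}}$. Substituting this back, the outer sum over $t_2,\dots,t_{k-2}$ is precisely the LHS of \eqref{eq:18} for the $(k-1)$-tuple $(n_1,\dots,n_{k-2},n_{k-1}+n_k)$: the produced factor $\qbinom{n_{k-2}+n_{k-1}+n_k}{n_{k-2}+t_{k-2}}$ is exactly the $i=k-2$ piece of the reduced product (with the boundary convention $t'_{k-1}=0$). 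The induction hypothesis then yields $\qbinom{n_1+\cdots+n_k}{n_1,\dots,n_{k-2},n_{k-1}+n_k}$, and multiplying by the prefactor $\qbinom{n_{k-1}+n_k}{n_{k-1}}$ and using $\qbinom{n_1+\cdots+n_k}{n_1,\dots,n_{k-2},n_{k-1}+n_k}\qbinom{n_{k-1}+n_k}{n_{k-1}}=\qbinom{n_1+\cdots+n_k}{n_1,\dots,n_k}$ completes the induction.

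The main obstacle is matching the exponent $j(n_{k-1}-t_{k-2}+j)$ to a standard $q$-Vandermonde form: there are several variants with distinct exponents, and only the one above fits without reindexing. The verification that the post-summation expression coincides with the LHS for $(n_1,\dots,n_{k-2},n_{k-1}+n_k)$ is bookkeeping but requires careful handling of the $i=k-2$ boundary term.
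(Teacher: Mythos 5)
Your proof is correct and follows the same route as the paper: the paper's proof is exactly "induction on $k$ using the $q$-Chu-Vandermonde identity $\sum_{i\ge0} q^{i(m-k+i)}\qbinom{m}{k-i}\qbinom{n}{i}=\qbinom{m+n}{k}$," stated without details. Your trinomial revision, the identification of the inner sum with that Vandermonde form, and the reduction to the $(k-1)$-tuple $(n_1,\dots,n_{k-2},n_{k-1}+n_k)$ all check out, so you have supplied precisely the "straightforward" details the paper omits.
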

\begin{proof}
  This can be done in a straightforward manner by induction on $k$ using the
  $q$-Chu-Vandermonde identity (see \cite[page 190, Solution to Exercise~100 in
  Chapter~1]{EC1}):
\[
\sum_{i\ge0} q^{i(m-k+i)}\qbinom{m}{k-i} \qbinom{n}{i}=\qbinom{m+n}{k}.
\]
\end{proof}

\subsection{Step 2: $\lambda=\Delta_{n_1,\dots,n_k}$ and $a,b$ are arbitrary.}

In this subsection we prove Theorem~\ref{thm:gen} when
$\lambda=\Delta_{n_1,\dots,n_k}\in\Dyck(2n)$, and $a$ and $b$ are arbitrary.  In
other words, we show that
\begin{equation}
  \label{eq:1}
B_q(\Delta_{n_1,\dots,n_k}; a,b) = \qbinom{n}{n_1,\dots,n_k} B_q(\Delta_n;a,b).
\end{equation}

We will prove \eqref{eq:1} by induction on $(a,b)$.  We have showed this when
$(a,b)=(0,0)$ in Step 1.  Let $(a,b)\ne(0,0)$ and suppose \eqref{eq:1} is true
for all pairs $(a',b')\ne(a,b)$ with $a'\leq a$ and $b'\leq b$. By symmetry we
can assume $a\ne0$.

Consider the two Dyck paths $\Delta_{a, n_1,\dots,n_k}$ and $\Delta_{a, n}$.
By the induction hypothesis, we have
\[
B_q(\Delta_{a, n_1,\dots,n_k}; 0,b) = \qbinom{a+n}{a,n_1,\dots,n_k} B_q(\Delta_{a+n};0,b), 
\]
\[
B_q(\Delta_{a, n}; 0,b) = \qbinom{a+n}{n} B_q(\Delta_{a+n};0,b).
\]
Combining the above two equations we get
\begin{equation}
  \label{eq:5}
B_q(\Delta_{a, n_1,\dots,n_k}; 0,b) =  \qbinom{n}{n_1,\dots,n_k } B_q(\Delta_{a,n};0,b).  
\end{equation}

\begin{lem}\label{lem:induct}
We have
\[
B_q(\Delta_{a}*\lambda; 0,b) = 
\sum_{i=0}^a q^{i^2/2} \qbinom{a}{i} B_q(\lambda; i,b). 
\]  
\end{lem}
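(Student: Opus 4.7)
The plan is to invoke Lemma~\ref{lem:sep} with the decomposition $\Delta_a*\lambda$, giving
\[
B_q(\Delta_a*\lambda;0,b)=\sum_{i\ge0}B_q(\Delta_a;0,i)\,B_q(\lambda;i,b).
\]
Since $L(\Delta_a;0,i)$ is empty whenever $i>a$ (the endpoint $(a-i,a+i)$ would have negative $x$-coordinate), the sum truncates at $i=a$, and the lemma reduces to the single evaluation $B_q(\Delta_a;0,i)=q^{i^2/2}\qbinom{a}{i}$ for $0\le i\le a$.

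To establish this evaluation, I would first analyse $L(\Delta_a;0,i)$ geometrically: any $\mu$ in this set runs from $(0,0)$ to $(a-i,a+i)$ while staying weakly above $\Delta_a$, whose top edge lies at height $a$ for every $x\ge1$. Hence $\mu$ cannot take a right-step before its $y$-coordinate reaches $a$, which forces $\mu$ to begin with $a$ consecutive up-steps; in particular $\mu$ passes through the peak $P=(0,a)$, i.e.\ through $P+(-t,t)$ with $t=0$. Lemma~\ref{lem:tile} then gives $\sum_{T\in\TD(\Delta_a/\mu)}q^{\|T\|}=\qbinom{a}{a}=1$, so the only truncated Dyck tiling of $\Delta_a/\mu$ is the empty one, whence $B_q(\Delta_a;0,i)$ collapses to the pure area generating function $\sum_{\mu\in L(\Delta_a;0,i)}q^{|\Delta_a/\mu|}$.

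Finally, after the forced initial $a$ up-steps, the continuation of $\mu$ from $(0,a)$ to $(a-i,a+i)$ is an arbitrary lattice path with $a-i$ right-steps and $i$ up-steps. Translating $\Delta_a/\mu$ by $(0,-a)$ decomposes the region into the right triangle with vertices $(a-i,0)$, $(a,0)$, $(a-i,i)$ of area $i^2/2$, together with the portion of the box $[0,a-i]\times[0,i]$ lying below the translated continuation. The area beneath such a path equals the size of a partition inside an $i\times(a-i)$ rectangle, so summing $q^{\text{area}}$ over all continuations yields $\qbinom{a}{i}$; multiplying by the constant factor $q^{i^2/2}$ gives the desired evaluation. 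The main obstacle is the first geometric observation—that lying above $\Delta_a$ forces $\mu$ through the peak and thereby collapses the tiling sum via Lemma~\ref{lem:tile}; once that is in place, the remaining area enumeration is entirely standard.
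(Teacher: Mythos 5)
Your proposal is correct and follows the same route as the paper: apply Lemma~\ref{lem:sep} to the decomposition $\Delta_a*\lambda$ and then use the evaluation $B_q(\Delta_a;0,i)=q^{i^2/2}\qbinom{a}{i}$. The paper simply asserts this evaluation without proof, whereas you supply a (correct) justification via the forced passage of $\mu$ through the peak, Lemma~\ref{lem:tile} with $t=0$, and the standard area enumeration; this is a welcome filling-in of detail rather than a different approach.
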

\begin{proof}
By Lemma~\ref{lem:sep}, we have
\[
B_q(\Delta_{a}*\lambda; 0,b) = \sum_{i\ge0} B_q(\Delta_a; 0,i) B_q(\lambda; i,b). 
\]  
Since $B_q(\Delta_a; 0,i)=q^{i^2/2} \qbinom{a}{i}$, we are done.
\end{proof}

By Lemma~\ref{lem:induct} we have
\begin{align}
  \label{eq:7}
B_q(\Delta_{a, n_1,\dots,n_k}; 0,b) &=
\sum_{i=0}^a q^{i^2/2} \qbinom{a}{i} B_q(\Delta_{n_1,\dots,n_k}; i,b),\\
\label{eq:13}
B_q(\Delta_{a, n}; 0,b) &= 
\sum_{i=0}^a q^{i^2/2} \qbinom{a}{i} B_q(\Delta_{n}; i,b).
\end{align}
By \eqref{eq:5}, \eqref{eq:7}, and \eqref{eq:13} we get
\begin{equation}
  \label{eq:6}
\sum_{i=0}^a q^{i^2/2} \qbinom{a}{i} B_q(\Delta_{n_1,\dots,n_k}; i,b)
=  \qbinom{n}{n_1,\dots,n_k}\sum_{i=0}^a q^{i^2/2} \qbinom{a}{i} B_q(\Delta_{n}; i,b).  
\end{equation}
By the induction hypothesis, for all $i<a$, we have
\[
B_q(\Delta_{n_1,\dots,n_k}; i,b)
=\qbinom{n}{n_1,\dots,n_k} B_q(\Delta_{n}; i,b).
\]
Thus the summands in both sides of \eqref{eq:6} equal for all $i<a$, forcing the
summands for $i=a$ to be equal as well. This implies that
\[
q^{a^2/2} B_q(\Delta_{n_1,\dots,n_k}; a,b)
=\qbinom{n}{n_1,\dots,n_k} q^{a^2/2} B_q(\Delta_{n}; i,b).
\]
Thus we have that \eqref{eq:1} is also true for $(a,b)$, and by induction, we
are done.

In particular, if $k=2$, we have the following.
\begin{prop}\label{prop:two}
We have
\[
B_q(\Delta_{n_1}*\Delta_{n_2}; a,b) = \qbinom{n_1+n_2}{n_1}B_q(\Delta_{n_1+n_2};
a,b). 
\]  
\end{prop}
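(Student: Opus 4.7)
The plan is to observe that Proposition \ref{prop:two} is essentially the $k=2$ specialization of equation \eqref{eq:1}, which has just been established in Step 2. Specifically, with $k=2$ the Dyck path $\Delta_{n_1,n_2}$ is by definition $\Delta_{n_1}*\Delta_{n_2}$, the total half-length is $n = n_1+n_2$, and the multinomial $q$-coefficient degenerates to
\[
\qbinom{n_1+n_2}{n_1,n_2} = \qbinom{n_1+n_2}{n_1}.
\]
So substituting these into \eqref{eq:1} yields the claim immediately.

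Therefore the proof is a one-line invocation: apply \eqref{eq:1} with $k=2$. No induction, q-binomial identity, or auxiliary lemma is needed beyond what was already developed. There is no real obstacle, since all of the work (Steps 1 and 2, Lemmas \ref{lem:area}, \ref{lem:formula}, \ref{lem:induct}, and Lemma \ref{lem:sep}) has already been carried out; the proposition is isolated only because it is the convenient form that will be used in Step 3 to handle a general Dyck path $\lambda$ via a splitting argument.
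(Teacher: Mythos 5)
Your proposal is correct and is exactly how the paper derives Proposition~\ref{prop:two}: the paper states it as the $k=2$ specialization of \eqref{eq:1}, with $\Delta_{n_1,n_2}=\Delta_{n_1}*\Delta_{n_2}$ and $\qbinom{n_1+n_2}{n_1,n_2}=\qbinom{n_1+n_2}{n_1}$. (The paper also gives a second, independent proof via basic hypergeometric series in Section~\ref{sec:anoth-proof-prop}, but that is an alternative, not the primary argument.)
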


\subsection{Step 3: Without restrictions}

In this subsection we prove Theorem~\ref{thm:gen} without restrictions. To do
this we need another lemma.  For a lattice path $\nu$, we define $\nu^-$ to be
the lattice path obtained from $\nu$ by deleting the first step and the last
step.

\begin{lem}\label{lem:lambda-}
  If $\lambda\in\Dyck(2n)$ cannot be expressed as $\lambda_1*\lambda_2$, we
  have
\[
B_q(\lambda; a,b) = \sum_{i\ge0} \sum_{0\leq r,s\leq1} 
 q^{(n-2)i+a+b-(r+s)/2} B_q(\lambda^-; a-i-r, b-i-s).
\]
\end{lem}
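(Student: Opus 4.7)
The plan is to establish the identity via a bijection between pairs $(\mu, T)$ with $\mu \in L(\lambda; a, b)$ and $T \in \TD(\lm)$ and tuples $(r, s, i, \mu^-, T^-)$ where $r, s \in \{0, 1\}$, $i \geq 0$, $\mu^- \in L(\lambda^-; a - i - r, b - i - s)$ (with $\lambda^-$ shifted by $(0, -1)$ so that its starting point lies at the origin), and $T^- \in \TD(\lambda^-/\mu^-)$. Since $\lambda$ is indecomposable, $\lambda = U \lambda^- D$ with the first up step going from $(0,0)$ to $(0,1)$ and the last down step from $(n-1, n)$ to $(n, n)$, so the region $\lm$ naturally decomposes along the antidiagonals $x + y = 1$ and $x + y = 2n - 1$ into a \emph{first strip} near $OP$, a \emph{middle region}, and a \emph{last strip} near $NQ$.

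First I set $r = 0$ if $\mu$ begins with an up step and $r = 1$ if it begins with a right step; symmetrically, $s = 0$ if $\mu$ ends with a right step and $s = 1$ if it ends with an up step. A direct shoelace computation on the parallelogram bounded by $OP$, the first step of $\mu$, the antidiagonal $x + y = 1$, and the first up step of $\lambda$ shows that the first strip has area $a - r/2$ (the parallelogram may degenerate to a triangle when $r = 1$); symmetrically the last strip has area $b - s/2$. I then call a tile of $T$ a \emph{spanning tile} if its SW diagonal edge lies on $OP$ and its NE diagonal edge lies on $NQ$. Since $\lambda$ is indecomposable, any such tile necessarily has length $2n$ and half-length $n$, and its area $2n$ splits as $1$ in the first strip, $2n - 2$ in the middle, and $1$ in the last strip. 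Let $i$ be the number of spanning tiles. The bijection removes them to yield a reduced region that, after the above shift, coincides with $\lambda^-/\mu^-$ for a unique $\mu^- \in L(\lambda^-; a - i - r, b - i - s)$, and the non-spanning tiles of $T$ form a truncated Dyck tiling $T^- \in \TD(\lambda^-/\mu^-)$.

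Combining the area decomposition $|\lm| = (a - r/2) + (b - s/2) + |\lambda^-/\mu^-| + i(2n - 2)$ with the half-length accounting $\|T\| = ni + \|T^-\|$ yields
\[
|\lm| - \|T\| = (n - 2)i + a + b - (r+s)/2 + |\lambda^-/\mu^-| - \|T^-\|,
\]
and summing $q^{|\lm| - \|T\|}$ over $(\mu, T)$ indexed by $(r, s, i, \mu^-, T^-)$ produces the claimed formula. The main obstacle is verifying the bijection rigorously: one must show (i) that spanning tiles can be freely inserted into or removed from $T$ while respecting the slope-$(-1)$ adjacency constraint of Definition~\ref{defn:TD}, (ii) that after removing $i$ spanning tiles the reduced region is exactly $\lambda^-/\mu^-$ (after the shift) for a unique $\mu^-$ with the stated parameters, and (iii) that the non-spanning tiles of $T$ correspond bijectively to tiles of $T^-$ with preservation of half-lengths. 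The indecomposability of $\lambda$ is essential throughout: it forces spanning tiles to be uniquely anchored on $OP$ and $NQ$ and to be cleanly separable from the remaining tiles in $T$.
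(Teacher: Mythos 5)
Your argument is the same as the paper's: identify the $i$ tiles of length $2n$ (your spanning tiles), which the cover-inclusive condition forces into the $i$ innermost layers, peel them off together with the first/last steps of $\lambda$ and $\mu$, and account for the area via $|\lm| = |\lambda^-/\mu^-| + 2(n-1)i + a + b - (r+s)/2$ and $\|T\| = \|T^-\| + ni$; your strip-first decomposition of the area is just a reorganization of the same computation. The verification points you list at the end are exactly the ones the paper likewise leaves as ``easy to check,'' so the proposal matches the paper's proof in both substance and level of detail.
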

\begin{proof}
  Let $\mu\in L(\lambda;a,b)$, $T\in \TD(\lm)$, and $O=(0,0), N=(n,n),
  A=O+(-a,a), B=N+(-b,b)$.

  \begin{figure}
    \centering
    \begin{pspicture}(0,-1)(9,9)
      \psgrid(0,0)(9,8)
      \psline(0,5)(1,5)(1,7)(2,7)(2,8)(5,8)
      \psline(9,4)(7,4)(7,3)(5,3)(5,0)
\psset{fillstyle=solid,fillcolor=yellow}
\pspolygon(9,4)(7,4)(7,3)(5,3)(5,0)(4,1)(4,4)(6,4)(6,5)(8,5)
\rput(-1,1){\pspolygon(9,4)(7,4)(7,3)(5,3)(5,0)(4,1)(4,4)(6,4)(6,5)(8,5)}
\pspolygon(2,4)(2,6)(4,6)(4,7)(5,7)(6,6)(5,6)(5,5)(3,5)(3,3)
\pspolygon(1,6)(1,7)(2,7)(3,6)(2,6)(2,5)
\rput(2,1){\pspolygon(1,6)(1,7)(2,7)(3,6)(2,6)(2,5)}
\pspolygon[linewidth=3pt,fillstyle=none,linecolor=red]
(5,6)(5,5)(3,5)(3,3)(1,5)(1,7)(2,7)(2,8)(4,8)(6,6)

\psdot(5,0) \psdot(9,4) \psdot(0,5) \psdot(5,8)
\psdot(3,3) \psdot(6,6) \psdot(1,5) \psdot(4,8)
\uput[-135](3,3){$O'$} \uput[45](6,6){$N'$} \uput[-100](1,5){$A'$} \uput[90](4,8){$B'$}

\psset{linewidth=.1pt,linecolor=gray}
\psline(5,0)(0,5) \psline(9,4)(5,8) \psline(5,0)(9,4)
\uput[-90](5,0){$O$}
\uput[0](9,4){$N$}
\uput[180](0,5){$A$}
\uput[90](5,8){$B$}
  \end{pspicture}
  \caption{The tiling $T'$ is a truncated Dyck tiling of the region
    $\lambda'/\mu^-$ whose boundary is drawn with thick lines.}
    \label{fig:lambda-}
  \end{figure}
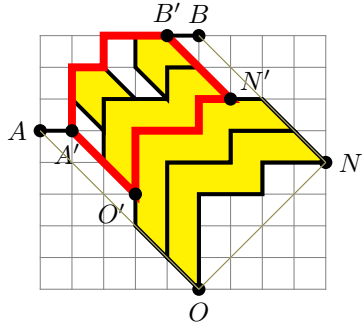

  Suppose $T$ has exactly $i$ tiles of length $2n$. Since $\lambda$ cannot be
  expressed as $\lambda_1*\lambda_2$, we have $\lambda^-\in\Dyck(2n-2)$.  Let
  $\lambda' = \lambda^- +(-i,i)$. We denote the starting point and the ending
  point of $\lambda'$ (resp.~$\mu^-$) by $O'$ and $N'$ (resp.~$A'$ and $B'$),
  see Figure~\ref{fig:lambda-}.  Then $A'=O'+(-a+i+r,a-i-r)$ and
  $B'=N'+(-b-i+s,b+i-s)$ for some $r,s\in \{0,1\}$ depending on $\mu$. Note that
  $\mu^-\in L(\lambda'; a-i-r, b-i-s)$.  Let $T'$ be the set of tiles in $T$
  except the $i$ tiles of length $2n$. Then we can consider $T'$ as a tiling in
  $\TD(\lambda'/\mu^-)$, or by translating it by $(i,-i)$, a tiling in
  $\TD(\lambda^-/\mu')$, where $\mu'=\mu^- +(i,-i)\in L(\lambda^-;a-i-r,b-i-s)$.
  Note that $T$ is determined by $i$ and $T'$.  It is easy to check that
\begin{align*}
  |\lm| &= |\lambda^-/\mu'| +2(n-1)i +a+b-(r+s)/2,\\
  \|T\| &= \|T'\| + ni.
\end{align*}
Thus,
\begin{align*}
B_q(\lambda; a,b) &= \sum_{\mu\in L(\lambda;a,b)} \sum_{T\in\TD(\lm)}
q^{|\lm|-\|T\|}\\
&=\sum_{i\ge0} \sum_{0\leq r,s\leq1} 
\sum_{\mu'\in L(\lambda^-;a-i-r,b-i-s)} 
\sum_{T'\in  \TD(\lambda^-/\mu')} q^{|\lambda^-/\mu'|-\|T'\|+(n-2)i+a+b-(r+s)/2}\\
&=\sum_{i\ge0} \sum_{0\leq r,s\leq1} 
 q^{(n-2)i+a+b-(r+s)/2} B_q(\lambda^-; a-i-r, b-i-s).
\end{align*}
\end{proof}

We now prove Theorem~\ref{thm:gen} by induction on $n$. If $n=0$, it is clear.
Suppose $n>0$ and the theorem is true for all integers less than $n$.

Case 1: $\lambda$ can be written as $\lambda_1*\lambda_2$. Suppose
$\lambda_1\in\Dyck(2n_1)$ and $\lambda_2\in\Dyck(2n_2)$. Then $n=n_1+n_2$. By
Lemma~\ref{lem:sep}, we have
\begin{equation}
  \label{eq:8}
 B_q(\lambda; a,b) = \sum_{i\ge0}  B_q(\lambda_1; a,i) B_q(\lambda_2; i,b).
\end{equation}
Since both $n_1$ and $n_2$ are smaller than $n$, by the induction hypothesis, we
have
\begin{align}\label{eq:9}
B_q(\lambda_1; a,i) &= 
\frac{\qint{n_1}!}{\prod_{c\in \C(\lambda_1)} \qint{|c|}} B_q(\Delta_{n_1}; a,i),\\
\label{eq:10}
B_q(\lambda_2; i,b) &= 
\frac{\qint{n_2}!}{\prod_{c\in \C(\lambda_2)} \qint{|c|}} B_q(\Delta_{n_2}; i,b).
\end{align}
By \eqref{eq:8}, \eqref{eq:9}, \eqref{eq:10}, and the fact that $\C(\lambda) =
\C(\lambda_1) \uplus \C(\lambda_2)$, we have
\begin{align*}
  B_q(\lambda; a,b) &= \frac{\qint{n_1}!\qint{n_2}!}{\prod_{c\in \C(\lambda)}
    \qint{|c|}}
  \sum_{i\ge0}  B_q(\Delta_{n_1}; a,i) B_q(\Delta_{n_2}; i,b)\\
  &=\frac{\qint{n_1}!\qint{n_2}!}{\prod_{c\in \C(\lambda)} \qint{|c|}}
  B_q(\Delta_{n_1,n_2}; a,b) & \mbox{(by  Lemma~\ref{lem:sep})}\\
  &=\frac{\qint{n_1}!\qint{n_2}!}{\prod_{c\in \C(\lambda)} \qint{|c|}}
  \qbinom{n_1+n_2}{n_1}B_q(\Delta_{n}; a,b) & \mbox{(by  Proposition~\ref{prop:two})}\\
  &=\frac{\qint{n}!}{\prod_{c\in \C(\lambda)} \qint{|c|}} B_q(\Delta_{n}; a,b).
\end{align*}

Case 2: $\lambda$ cannot be expressed as $\lambda_1*\lambda_2$.  Then
$\lambda^-\in\Dyck(2n-2)$ and
\[
\{|c|:c\in\C(\lambda)\}=\{|c|:c\in\C(\lambda^-)\}\cup\{n\}.
\]
Thus $B_q(\lambda; a,b)$ is equal to
\begin{align*}
& \sum_{i\ge0} \sum_{0\leq r,s\leq1} 
 q^{(n-2)i+a+b-(r+s)/2} B_q(\lambda^-; a-i-r, b-i-s) 
&\mbox{(by Lemma~\ref{lem:lambda-})}\\
&= \frac{\qint{n-1}!}{\prod_{c\in \C(\lambda^-)} \qint{|c|}}
\sum_{i\ge0} \sum_{0\leq r,s\leq1}  q^{(n-2)i+a+b-(r+s)/2}
B_q(\Delta_{n-1}; a-i-r, b-i-s) 
&\mbox{(by ind. hyp.)}\\
&= \frac{\qint{n}!}{\prod_{c\in \C(\lambda)} \qint{|c|}}
B_q(\Delta_{n}; a, b) .
&\mbox{(by Lemma~\ref{lem:lambda-})}
\end{align*}

Since Theorem~\ref{thm:gen} is true for $n$ in both cases, by induction we are done.

\section{Another proof of Proposition~\ref{prop:two}}
\label{sec:anoth-proof-prop}

The reader may notice that in the proof of Theorem~\ref{thm:gen} all we need in
Steps 1 and 2 is Proposition~\ref{prop:two}.  In this section we give another
proof of Proposition~\ref{prop:two} using hypergeometric series. 

Suppose $\mu\in L(\Delta_{n_1,n_2}; a,b)$.  Let $P_1$ and $P_2$ be the peaks of
$\Delta_{n_1}$ and $\Delta_{n_2}$. Then there are unique $i\geq0$ and $j\geq0$
such that $\mu$ passes through $Q_1=P_1+(-i,i)$ and $Q_2=P_2+(-j,j)$. Let
$\mu_1,\mu_2,\mu_3$ be the paths obtained from $\mu$ by dividing it at $Q_1$ and
$Q_2$. We can divide the region $\Delta_{n_1,n_2}/\mu$ as shown in
Figure~\ref{fig:division2}. Then
\[
\area(\mathbf{1})=\area(\mathbf{2})=n_1 i, \qquad
\area(\mathbf{3})=\area(\mathbf{4})=n_2j ,
\]
\[
\area(\mathbf{5})= \frac12(a-i)^2, \qquad
\area(\mathbf{6})= \frac12(i-j)^2, \qquad
\area(\mathbf{7})= \frac12(b-j)^2.
\]
Once $i$ and $j$ are fixed, the sums of $\area(\mathbf{8})$,
$\area(\mathbf{9})$, and $\area(\mathbf{10})$ for all possible $\mu_1$, $\mu_2$,
and $\mu_3$ are respectively $\qbinom{n_1}{a-i}$, $\qbinom{n_1+n_2}{n_1+i-j}$,
and $\qbinom{n_2}{b-j}$. Let $\nu_1$ and $\nu_2$ be the lattice paths obtained
by dividing $\mu$ with the line of slope $-1$ passing through $(n_1,n_1)$. Then
$\mu=\nu_1*\nu_2$. By Lemmas~\ref{lem:split} and \ref{lem:tile}, we have
\begin{align*}
\sum_{T\in\TD(\Delta_{n_1,n_2}/\mu)} q^{-\|T\|} &= 
\sum_{T\in\TD(\Delta_{n_1}/\nu_1)} q^{-\|T\|} \sum_{T\in\TD(\Delta_{n_2}/\nu_2)} q^{-\|T\|}\\
&=\Qbinom{n_1+i}{n_1}{q^{-1}} \Qbinom{n_2+j}{n_2}{q^{-1}}
=q^{-n_1i-n_2j}\qbinom{n_1+i}{n_1} \qbinom{n_2+j}{n_2}.
\end{align*}
Thus $B_q(\Delta_{n_1,n_2}; a,b)$ is equal to
\begin{align*}
&\sum_{i,j\ge0} q^{n_1i+n_2j+\frac12(a-i)^2+\frac12(i-j)^2+\frac12(b-j)^2}
\qbinom{n_1}{a-i}\qbinom{n_1+n_2}{n_1+i-j}\qbinom{n_2}{b-j}
\qbinom{n_1+i}{n_1} \qbinom{n_2+j}{n_2}\\
&= \sum_{i,j\ge0} q^{\frac{a^2+b^2}2 +(n_1-a)i+(n_2-b)j-ij+i^2+j^2}
\qbinom{n_1}{a-i}\qbinom{n_1+n_2}{n_1+i-j}\qbinom{n_2}{b-j}
\qbinom{n_1+i}{n_1} \qbinom{n_2+j}{n_2}. 
\end{align*}
Similarly, one can check that
\begin{equation}
  \label{eq:14}
B_q(\Delta_{n}; a,b)=
\sum_{i\ge0} q^{\frac{a^2+b^2}2 + (n-a-b)i + i^2}
\qbinom{n+i}i \qbinom{n}{a-i} \qbinom{n}{b-i}.
\end{equation}

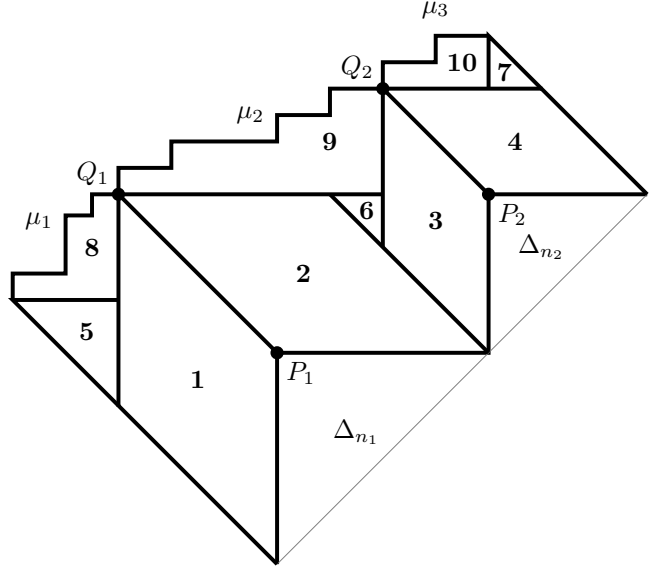
\begin{figure}
  \centering
\psset{unit=20pt}  
\begin{pspicture}(-2,0)(10,11)
\psline(3,0)(3,4)(7,4)(7,7)(10,7)
\psline[linewidth=.1pt, linecolor=gray](3,0)(10,7)
\psline(3,4)(0,7) \psline(7,4)(4,7) \psline(7,7)(5,9)
\psline(5,9)(5,6) \psline(0,7)(5,7)
\psline(0,7)(0,7.5)(1,7.5)(1,8)(3,8)(3,8.5)(4,8.5)(4,9)(5,9)
\psdot(0,7) \psdot(5,9) \psdot(3,4) \psdot(7,7)
\psline(3,0)(0,3)(0,7)
\psline(5,9)(8,9)(10,7)
\psline(0,3)(-2,5)(0,5)
\psline(8,9)(7,10)(7,9)
\psline(-2,5)(-2,5.5)(-1,5.5)(-1,6.6)(-.5,6.6)(-.5,7)(0,7)
\psline(5,9)(5,9.5)(6,9.5)(6,10)(7,10)
\uput[135](0,7){$Q_1$}
\uput[135](5,9){$Q_{2}$}
\uput[-45](3,4){$P_{1}$}
\uput[-45](7,7){$P_{2}$}
\rput(-1.5,6.5){$\mu_1$}
\rput(2.5,8.5){$\mu_2$}
\rput(6,10.5){$\mu_3$}
\rput(4.5,2.5){$\Delta_{n_1}$}
\rput(8,6){$\Delta_{n_{2}}$}
\rput(1.5,3.5){\bf 1}
\rput(3.5,5.5){\bf 2}
\rput(6,6.5){\bf 3}
\rput(7.5,8){\bf 4}
\rput(-.6,4.4){\bf 5}
\rput(4.7,6.7){\bf 6}
\rput(7.3,9.3){\bf 7}
\rput(-.5,6){\bf 8}
\rput(4,8){\bf 9}
\rput(6.5,9.5){\bf 10}
\end{pspicture}
\caption{Dividing the region into 10 regions.}
  \label{fig:division2}
\end{figure}%
Therefore, to prove Proposition~\ref{prop:two} it remains to show the following
proposition.
\begin{prop}\label{prop:identity}
  For nonnegative integers $n_1,n_2,a,b$, and $n=n_1+n_2$, we have
\begin{multline*}
\sum_{i,j\ge0} q^{(n_1-a)i+(n_2-b)j-ij+i^2+j^2}
\qbinom{n_1}{a-i}\qbinom{n_2}{b-j} \qbinom{n_1+n_2}{n_1+i-j}
\qbinom{n_1+i}{n_1} \qbinom{n_2+j}{n_2}\\
= \qbinom{n}{n_1}\sum_{i\ge0} q^{(n-a-b)i + i^2}
\qbinom{n+i}i \qbinom{n}{a-i} \qbinom{n}{b-i}.
\end{multline*}
\end{prop}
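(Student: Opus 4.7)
The plan is to evaluate the inner sum over $j$ on the left-hand side while keeping $i$ fixed, reducing the double sum to a single sum in $i$, and then to match the result term-by-term against the right-hand side. Structurally, the LHS summand contains three $q$-binomials depending on $j$, namely $\qbinom{n_2}{b-j}$, $\qbinom{n_1+n_2}{n_1+i-j}$, and $\qbinom{n_2+j}{n_2}$, coupled with the weight $q^{(n_2-b)j - ij + j^2}$; a product of three $q$-binomials with a quadratic $q$-exponent is the signature of a terminating $_3\phi_2$ series, so the natural tool is the $q$-Saalschütz summation rather than the $q$-Chu-Vandermonde identity (which handles only two factors).

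First, I would rewrite each $q$-binomial depending on $j$ in terms of shifted $q$-Pochhammer symbols $(x;q)_j$, pulling out the $j$-independent factors. This places the inner sum in standard hypergeometric form $\sum_j (A;q)_j(B;q)_j(q^{-N};q)_j / [(q;q)_j(C;q)_j(D;q)_j] \, q^{\text{stuff}\cdot j}$ for explicit parameters $A,B,C,D,N$ depending on $n_1,n_2,i,a,b$. I would then check the Saalschütz balancing condition $CDq = ABq^{-N}\cdot q^{N+1}$ (equivalently, that the series is $k$-balanced with the correct $q$-weight); the quadratic shape of the exponent $j^2$ suggests that the shifts in the Pochhammer parameters will indeed make the balance identity hold. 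Applying $q$-Saalschütz then collapses the $j$-sum to a product of two $q$-Pochhammer quotients.

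Second, I would substitute the closed form for the inner sum back into the outer $i$-summation, combine with the $i$-dependent factors $\qbinom{n_1}{a-i}\qbinom{n_1+i}{n_1} q^{(n_1-a)i+i^2}$, and simplify by pulling the factor $\qbinom{n}{n_1}$ out front. At this point the LHS becomes a single sum of a product of three $q$-binomials in $i$ with weight $q^{(\text{linear in }i)+i^2}$, and I would verify that the summand equals $\qbinom{n+i}{i}\qbinom{n}{a-i}\qbinom{n}{b-i} q^{(n-a-b)i+i^2}$ exactly, using standard $q$-binomial identities such as $\qbinom{m}{k}\qbinom{k}{\ell} = \qbinom{m}{\ell}\qbinom{m-\ell}{k-\ell}$ and $\qbinom{m+k}{k}\qbinom{m}{r} = \qbinom{m+k}{r}\qbinom{m+k-r}{k}$.

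The main obstacle is the first step: correctly identifying the parameters so that the inner sum fits a known $_3\phi_2$ summation and tracking the fractional and quadratic powers of $q$ that arise from converting between $q$-binomials and $q$-Pochhammer symbols. If the direct $q$-Saalschütz application does not fit cleanly, I would fall back on iterating $q$-Chu-Vandermonde, first combining two of the three $j$-binomials via an auxiliary index and then summing out the auxiliary index — this route is more elementary but requires more bookkeeping. The final matching of $i$-summands is expected to be routine algebra once the inner sum has been evaluated.
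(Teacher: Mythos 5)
Your plan breaks down at the very first step: the inner $j$-sum is not Saalschützian and has no product closed form, so neither $q$-Saalsch\"utz nor any other $_3\phi_2$ summation can collapse it. A small example makes this concrete: take $n_1=n_2=a=b=1$ and $i=1$; the inner sum is $\sum_j q^{j^2-j}\qbinom{1}{1-j}\qbinom{2}{2-j}\qbinom{1+j}{1}=1+(1+q)^2=2+2q+q^2$, which equals $5$ at $q=1$ and hence cannot factor into $q$-integers. (Equivalently, after converting the three $j$-dependent $q$-binomials to Pochhammer symbols, the quadratic powers of $q$ do not cancel against the explicit $q^{j^2}$, so you do not even land in a $_3\phi_2$ with fixed argument, let alone a balanced one.) Your proposed final step also fails independently: the $i$-slices of the two sides do not agree term by term. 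In the same example the LHS slices are $(1+q)^2$ for $i=0$ and $q(1+q)(2+2q+q^2)$ for $i=1$, whereas the RHS terms (including the prefactor $\qbinom{2}{1}$) are $(1+q)^3$ and $q(1+q)(1+q+q^2)$; only the totals agree. So the outer index on the left does not correspond to the index on the right, and no amount of $q$-binomial rearrangement of a single summand will fix that.

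The actual difficulty is that the double sum must be \emph{rearranged} before anything can be summed. The proof I give lifts the identity to one with free parameters $x,y$ (specializing to $q^{n_1},q^{n_2}$ at the end), writes the inner series as a $_3\phi_2$, and applies a $_3\phi_2$ \emph{transformation} (Gasper--Rahman (III.9)) rather than a summation; only after reindexing $i\mapsto i+j$ and interchanging the order of summation does a summable series appear, and it is a $_2\phi_1$ evaluated by the $q$-Gauss sum, not a $_3\phi_2$. Your fallback of "iterating $q$-Chu--Vandermonde" runs into the same wall: each single-index sum you try to evaluate in isolation fails to have a closed form, so some transformation/reindexing step of this kind is unavoidable. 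As written, the proposal does not constitute a proof.
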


\begin{proof}
  We will follow the standard notation in hypergeometric series, see
  \cite{Gasper2004}.  It is straightforward to check that the identity in this
  proposition is the $(a,b,x,y)\mapsto (q^{-a},q^{-b},q^{n_1},q^{n_2})$
  specialization of
\begin{multline}\label{eq:hyper}
\sum_{i,j\ge0} (-x)^i (-y)^j q^{\binom{i+1}{2}+\binom{j+1}{2}-ij}
\frac{(a,xq;q)_i (b,yq;q)_j} {(q,axq;q)_i(q,byq;q)_j(xq;q)_{i-j}(yq;q)_{j-i}} \\
=\frac{(xq,yq,axyq,bxyq;q)_{\infty}} {(xyq,xyq,axq,byq;q)_{\infty}}
\qhyper{3}{2}{a,b,xyq}{axyq,bxyq}{xyq}.
\end{multline}
We now prove \eqref{eq:hyper} as follows. Observe that the left hand side of
\eqref{eq:hyper} can be written as
\begin{align*}
&\sum_{i\ge0} \frac{(xyq)^i (a,1/y;q)_i}{(q,axq;q)_i} 
\qhyper{3}{2}{b,yq,q^{-i}/x}{byq,yq^{1-i}}{xyq}\\
=&\frac{(yq,bxyq;q)_{\infty}}{(xyq,byq;q)_{\infty}}
\sum_{i\ge0} \frac{(xyq)^i (a,1/y;q)_i}{(q,axq;q)_i} 
\qhyper{3}{2}{b,xyq,q^{-i}}{bxyq,yq^{1-i}}{yq}\\
=&\frac{(yq,bxyq;q)_{\infty}}{(xyq,byq;q)_{\infty}}
\sum_{i\ge0} \frac{(xyq)^i (a,1/y;q)_i}{(q,axq;q)_i} 
\sum_{j\ge0} \frac{(b,xyq,q^{-i};q)_j}{(q,bxyq,yq^{1-i};q)_j} (yq)^j,
\end{align*}
where we use \cite[Eq. (III.9)]{Gasper2004} with $(a,b,c,d,e)\mapsto
(b,yq,q^{-i}/x,yq^{1-i},byq)$. By replacing $i$ with $i+j$ and interchaning the
sums, we obtain that the above equals
\[ 
\frac{(yq,bxyq;q)_{\infty}}{(xyq,byq;q)_{\infty}}
\sum_{j\ge0} \frac{(xyq)^j (a,b,xyq;q)_j}{(q,axq,bxyq;q)_j}
\qhyper{2}{1}{aq^j,1/y}{axq^{j+1}}{xyq}.
\]
The $q$-Gauss sum \cite[Eq. (II.8)]{Gasper2004} completes the proof of
\eqref{eq:hyper}.
\end{proof}

\section{A bijection from Dyck tilings to matchings}
\label{sec:bijection}

In this section we find a bijection sending Dyck tilings to Hermite histories,
which are in simple bijection with complete matchings. We start by defining
these objects. 

A \emph{(complete) matching} on $[2n]$ is a set of pairs $(i,j)$ of integers in
$[2n]$ with $i<j$ such that each integer in $[2n]$ appears exactly once.  We
denote by $\M(2n)$ the set of matchings on $[2n]$.  It is convenient to
represent $\pi\in\M(2n)$ by the diagram obtained by joining $i$ and $j$
with an arc for each $(i,j)\in \pi$ as shown in Figure~\ref{fig:matching}.  We
define the \emph{shape} of $\pi$ to be the Dyck path such that the $i$th step is
an up step if $(i,j)\in \pi$ for some $j$, and a down step otherwise, see
Figure~\ref{fig:matching}. For a Dyck path $\mu$, the set of matchings with shape
$\mu$ is denoted by $\M(\mu)$.  A \emph{crossing} (resp.~\emph{nesting}) of
$\pi\in\M(2n)$ is a set of two pairs $(i,j)$ and $(i',j')$ in $\pi$ such that
$i<i'<j<j'$ (resp.~$i<i'<j'<j$).  The number of crossings (resp.~nestings) of
$\pi$ is denoted by $\cro(\pi)$ (resp.~$\nest(\pi)$). For example, if $\pi$ is
the matching in Figure~\ref{fig:matching}, we have $\cro(\pi)=2$ and
$\nest(\pi)=1$.

\begin{figure}
  \centering
  \begin{pspicture}(1,0)(14,4)
\psset{linewidth=1pt, dotsize=5pt, unit=20pt}
    \multido{\n=1+1}{8}{\vput{\n}}
    \edge15 \edge23 \edge47 \edge68
  \end{pspicture}
  \begin{pspicture}(1,0)(4,5)
  \end{pspicture}
  \begin{pspicture}(0,0)(4,4)
\dyckgrid4
\psline(0,0)(0,2)(1,2)(1,3)(2,3)(2,4)(4,4)
  \end{pspicture}
  \caption{The diagram (left) of the matching $\{(1,5), (2,3), (4,7), (6,8)\}$
    and its shape (right).}
  \label{fig:matching}
\end{figure}
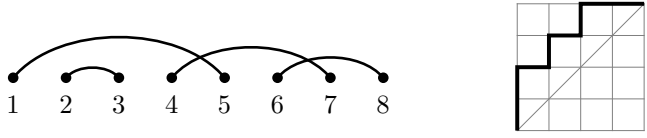

A \emph{Hermite history} of length $2n$ is a pair $(\mu,H)$ of a Dyck path
$\mu\in\Dyck(2n)$ and a labeling $H$ of the down steps of $\mu$ such that the
label of a down step of height $h$ is an integer in $\{0,1,\dots,h-1\}$. We
denote by $\HH(2n)$ the set of Hermite histories of length $2n$, and by
$\HH(\mu)$ the set of Hermite histories with Dyck path $\mu$.  There is a
well-known bijection $\zeta:\M(2n)\to\HH(2n)$, see \cite{ViennotOP} or
\cite{JVR}. For $\pi\in \M(2n)$, the corresponding Hermite history
$\zeta(\pi)=(\mu,H)$ is defined as follows. The Dyck path $\mu$ is the shape of
$\pi$. For a down step $D$ of $\mu$, if it is the $j$th step, there is a pair
$(i,j)\in \pi$. Then the label of $D$ is defined to be the number of pairs
$(i',j')\in \pi$ such that $i<i'<j<j'$. For example, if $\pi$ is the matching in
Figure~\ref{fig:matching}, then $\mu$ is the Dyck path in
Figure~\ref{fig:matching} and the labels of the downs steps are 0, 1, 1, and 0
in this order. Note that $\zeta$ is also a bijection from $\M(\mu)$ to
$\HH(\mu)$.

For $\mu\in \Dyck(2n)$ and $(\mu,H)\in\HH(2n)$, we define
\begin{align*}
  \HT(\mu) &= \sum_{c\in\C(\mu)} \left(\HT(c)-1\right),\\
  \|H\| &= \sum_{i\in H} i.
\end{align*}
The next lemma easily follows from the construction of the map $\zeta$.

\begin{lem}\label{lem:cr}
Let $\zeta(\pi)=(\mu,H)$. Then $\|H\| = \cro(\pi)$ and $\HT(\mu) =
  \cro(\pi)+\nest(\pi)$.
\end{lem}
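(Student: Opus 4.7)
The plan is to prove the two equations separately, with both following from a careful unpacking of the definition of the bijection $\zeta$.

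For $\|H\| = \cro(\pi)$, I would simply unpack the construction of $\zeta$. The label at the down step of position $j$ (where $(i,j) \in \pi$) is defined to be the number of pairs $(i',j') \in \pi$ with $i < i' < j < j'$. Each such pair is precisely a crossing of $\pi$ in which $(i,j)$ is the earlier-closing arc. Since every crossing of $\pi$ has a unique earlier-closing arc, summing these labels over all down steps counts every crossing of $\pi$ exactly once, giving $\|H\| = \cro(\pi)$.

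For $\HT(\mu) = \cro(\pi) + \nest(\pi)$, I would first rewrite $\HT(\mu) = \sum_D (h_D - 1)$, where the sum runs over the down steps of $\mu$ and $h_D$ denotes the height of $D$; this uses that the chord containing $D$ has height equal to $h_D$ (both quantities equal $y-x$ at the start of $D$). Then for each pair $(i,j) \in \pi$, I would interpret $h_{D_j}$ combinatorially as the number of arcs of $\pi$ ``currently open'' at position $j$, i.e., arcs with opener $< j$ and closer $\geq j$. Since $(i,j)$ itself is one such arc, the remaining $h_{D_j}-1$ open arcs $(i',j')$ satisfy $i' \ne i$ and $j' > j$. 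If $i' < i$ then $i' < i < j < j'$, so $(i,j)$ is nested inside $(i',j')$; if $i' > i$ then $i < i' < j < j'$, so $(i,j)$ and $(i',j')$ cross with $(i,j)$ closing first. Summing over all $(i,j) \in \pi$, the arcs of the first kind contribute $\nest(\pi)$ (each nesting counted once by its inner arc), and the arcs of the second kind contribute $\cro(\pi)$ (by the first part). Hence $\HT(\mu) = \sum_j (h_{D_j}-1) = \nest(\pi) + \cro(\pi)$.

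The only step needing geometric justification is the identification $\HT(c) = h_D$ for the chord $c$ containing the down step $D$, which I would verify directly from the definition of chord height: the chord of a matched up--down pair lies on the line $y = x + k$, where $k$ is the depth of the Dyck path just before the matched up step, and this $k$ equals $h_D - 1$, so $\HT(c) = k+1 = h_D$. I do not foresee a genuine obstacle, since the argument is bookkeeping once the arcs passing through position $j$ are sorted into crossings-from-the-right and containings-from-outside.
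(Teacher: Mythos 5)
Your proof is correct and follows the same route the paper intends: the paper simply asserts that the lemma ``easily follows from the construction of the map $\zeta$,'' and your argument is the straightforward unpacking of that construction (labels count crossings at the earlier-closing arc; chord heights count open arcs, which split into crossings and nestings). Nothing further is needed.
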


From now on we will use the following notations: for $\lambda,\mu\in\Dyck(2n)$,
\begin{align*}
\D(\lambda/*) &= \bigcup_{\nu\in\Dyck(2n)} \D(\lambda/\nu),\\
\D(*/\mu) &= \bigcup_{\nu\in\Dyck(2n)} \D(\nu/\mu),\\
\D(2n) &= \bigcup_{\nu,\rho\in\Dyck(2n)} \D(\nu/\rho).  
\end{align*}

For a Dyck tile $\eta$, we define the \emph{entry} (resp.~\emph{exit}) of $\eta$
to be the north border (resp.~the south border) of the northeast cell (resp.~
the southwest cell) of $\eta$.

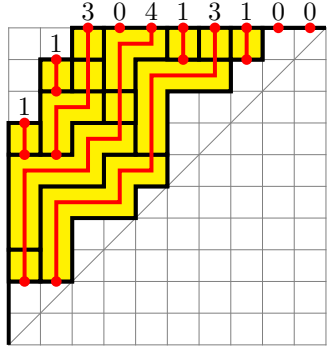
\begin{figure}
  \centering
  \begin{pspicture}(0,0)(11,11) 
\dyckgrid{10}
\psline(0,0)(0,7)(1,7)(1,9)(2,9)(2,10)(10,10)
\psline(0,2)(2,2)(2,4)(4,4)(4,5)(5,5)(5,8)(7,8)(7,9)(8,9)(8,10)(10,10)
\psset{fillstyle=solid,fillcolor=yellow}
\pspolygon(1,2)(1,5)(3,5)(3,6)(5,6)(5,5)(4,5)(4,4)(2,4)(2,2)
\rput(-1,1){\pspolygon(1,2)(1,5)(3,5)(3,6)(5,6)(5,5)(4,5)(4,4)(2,4)(2,2)}
\pspolygon(1,6)(1,8)(3,8)(3,7)(2,7)(2,6)
\rput(2,2){\pspolygon(1,6)(1,8)(3,8)(3,7)(2,7)(2,6)}
\pspolygon(4,6)(4,9)(7,9)(7,8)(5,8)(5,6)
\psframe(0,2)(1,3)
\rput(0,4){\psframe(0,2)(1,3)}
\rput(1,6){\psframe(0,2)(1,3)}
\rput(2,6){\psframe(0,2)(1,3)}
\rput(2,7){\psframe(0,2)(1,3)}
\rput(3,5){\psframe(0,2)(1,3)}
\rput(5,7){\psframe(0,2)(1,3)}
\rput(6,7){\psframe(0,2)(1,3)}
\rput(7,7){\psframe(0,2)(1,3)}
\psset{linecolor=red,fillstyle=none}
\psline(.5,7)(.5,6)
\psline(1.5,9)(1.5,8)
\psline(2.5,10)(2.5,7.5)(1.5,7.5)(1.5,6)
\psline(4.5,10)(4.5,9.5)(3.5,9.5)(3.5,6.5)(2.5,6.5)(2.5,5.5)(.5,5.5)(.5,2)
\psline(5.5,10)(5.5,9)
\psline(6.5,10)(6.5,8.5)(4.5,8.5)(4.5,5.5)(3.5,5.5)(3.5,4.5)(1.5,4.5)(1.5,2)
\psline(7.5,10)(7.5,9)
\psset{dotsize=4pt}
\psdot(.5,7) \rput(.5,7.5){$1$}
\psdot(1.5,9) \rput(1.5,9.5){$1$}
\psdot(2.5,10) \rput(2.5,10.5){$3$}
\psdot(3.5,10) \rput(3.5,10.5){$0$}
\psdot(4.5,10) \rput(4.5,10.5){$4$}
\psdot(5.5,10) \rput(5.5,10.5){$1$}
\psdot(6.5,10) \rput(6.5,10.5){$3$}
\psdot(7.5,10) \rput(7.5,10.5){$1$}
\psdot(8.5,10) \rput(8.5,10.5){$0$}
\psdot(9.5,10) \rput(9.5,10.5){$0$}
\psdot(.5,6) \psdot(.5,2)
\psdot(1.5,8) \psdot(1.5,6) \psdot(1.5,2)
\psdot(5.5,9) \psdot(7.5,9)
\end{pspicture}%
 \caption{An example of the map $\psi$.}
  \label{fig:psi}
\end{figure}

For $T\in\D(*/\mu)$, we define $\psi(T)=(\mu,H)$ as follows. The label of a down
step $s$ of $\mu$ is the number of Dyck tiles that we pass in the following
process.  We start from $s$ and travel to the south until we reach a border that
is not an entry; if we arrive at the entry of a Dyck tile, then continue traveling
from the exit of the Dyck tile, see Figure~\ref{fig:psi}. Observe that every
tile is traveled exactly once, which can be checked using the definition of a
truncated Dyck tiling. Thus we have $|T|=\|H\|$.  It is easy to see that the map
$\psi$ has the same recursive structure as the proof of Theorem~\ref{thm:2} in
Section~\ref{sec:proof2}. Thus $\psi:\D(*/\mu)\to\HH(\mu)$ is a bijection. It is
also possible to construct the inverse map of $\psi$, but it is more complicated
than $\psi$.

\begin{thm}\label{thm:bijection}
  Given a Dyck path $\mu\in\Dyck(2n)$, the map $\psi$ gives a bijection $\psi:
  \D(*/\mu) \to \HH(\mu)$ such that if $\psi(T)=(\mu,H)$, then
  $|T|=\|H\|$. Thus, $\zeta^{-1}\circ\psi : \D(*/\mu)\to\M(\mu)$ is a bijection
  such that if $(\zeta^{-1}\circ\psi)(T) = \pi$, then $|T|=\cro(\pi)$. 
\end{thm}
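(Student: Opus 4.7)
The plan is to check that $\psi$ is well-defined, that $|T|=\|H\|$ under $\psi$, and that $\psi$ is a bijection, using induction on $m(\mu)=|\delta_{n-1}/\mu|$ parallel to the proof of Theorem~\ref{thm:2}. Once these are established, the claim about $\zeta^{-1}\circ\psi$ follows immediately from Lemma~\ref{lem:cr} and the fact that $\zeta$ is a bijection.

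For well-definedness, I would verify that the label assigned to a down step of chord height $h$ lies in $\{0,1,\dots,h-1\}$. The southern traversal begins in the cell immediately below the down step, whose level (the value $j-i$ for a cell with SW corner $(i,j)$) equals $h-1$. Because the NE and SW cells of any Dyck tile share the same level---the centers of its cells form a Dyck path whose endpoints have equal height---each teleport from an entry to the corresponding exit deposits the traversal into the cell directly below the tile's SW cell, at a level one smaller. Since the traversal terminates at a non-entry border and cells in $\lm$ all have level $\geq 1$, at most $h-1$ teleports can occur. For $|T|=\|H\|$, I would show that each tile is crossed by exactly one traversal: starting at a tile's entry and walking due north, the cover-inclusive condition on Dyck tilings guarantees that the walk exits $\lm$ at a uniquely determined down step of $\mu$.

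For bijectivity I would induct on $m(\mu)$. The base case $\mu=\delta_{n-1}$ is immediate: $\D(*/\mu)=\{\emptyset\}$, and every chord of $\delta_{n-1}$ has height $1$, so $\HH(\mu)$ contains only the all-zero labelling. For the inductive step, pick an addable cell $s\in\delta_{n-1}/\mu$, let $h$ be the height of the chord of $\mu$ whose UD-peak sits at $s$, and let $s_0$ denote the down step of $\mu$ forming the north border of $s$. Following Section~\ref{sec:proof2}, I would partition $\D(*/\mu)$ into Case~1 (``$s$ is a tile of $T$'') and Case~2 (``$s$ is not a tile of $T$''), and correspondingly split $\HH(\mu)$ by whether $H(s_0)\ge 1$ or $H(s_0)=0$. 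In Case~1, $T\mapsto T\setminus\{s\}$ is a bijection onto $\D(*/\mu_1)$ with $\mu_1=\mu\cup\{s\}$; on the Hermite-history side, decrementing $H(s_0)$ bijects the ``$\ge 1$'' subset of $\HH(\mu)$ onto $\HH(\mu_1)$, moving the label from the height-$h$ chord of $\mu$ at $s_0$ to the height-$(h-1)$ chord of $\mu_1$ along the south border of $s$. In Case~2, the slice-collapse of Figure~\ref{fig:slice} bijects onto $\D(*/\mu_2)$, where $\mu_2\in\Dyck(2(n-1))$ results from $\mu$ by deleting the chord of height $h$ around $s$; deleting the zero label at $s_0$ bijects the ``$=0$'' subset of $\HH(\mu)$ onto $\HH(\mu_2)$.

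The main obstacle is to show that $\psi$ intertwines these bijections. For Case~1 the compatibility is direct: the southern traversal from $s_0$ teleports through the single-cell tile $s$ as its first step (since $s_0$ is the entry of $s$), contributing $1$, after which it proceeds identically to the traversal from the down step of $\mu_1$ just south of $s$ in $T\setminus\{s\}$, while all other traversals are unchanged. For Case~2 one must verify that $H(s_0)$ really is $0$. A key geometric observation is that, because $s$ is addable to $\mu$, the path $\mu$ arrives at the SW corner of $s$ via a step whose $y$-coordinate forces any cell immediately west of $s$ at the same level or below to lie in $\mu$; consequently $s$ cannot be the NE cell of any multi-cell Dyck tile. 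Hence in Case~2 either $s\not\in\lm$ (and the traversal from $s_0$ stops at the upper boundary) or $s$ lies in the interior of a multi-cell tile (and the traversal stops at the non-entry south border of $s$); in both subcases $H(s_0)=0$, and the slice-collapse visibly leaves every other traversal intact. Combining Cases~1 and~2 with the induction hypothesis applied to $\mu_1$ (where $m(\mu_1)=m(\mu)-1$) and $\mu_2$ (of smaller half-length) completes the bijectivity of $\psi$, and Lemma~\ref{lem:cr} then yields the statement about $\zeta^{-1}\circ\psi$.
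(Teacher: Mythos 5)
Your proposal is correct and follows essentially the same route as the paper, which defines $\psi$ by the same southward traversal, notes that every tile is traversed exactly once (giving $|T|=\|H\|$), and deduces bijectivity from the fact that $\psi$ has the same recursive structure as the proof of Theorem~\ref{thm:2}, with the claim about $\zeta^{-1}\circ\psi$ then following from Lemma~\ref{lem:cr}. You have simply written out in detail the well-definedness check and the Case~1/Case~2 compatibility that the paper leaves as observations, and your verifications (in particular that an addable cell $s$ cannot be the NE cell of a positive-length tile, so $H(s_0)=0$ in Case~2) are accurate.
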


We now discuss several applications of Theorem~\ref{thm:bijection}.  First of all,
since
\begin{equation}
  \label{eq:21}
\sum_{H\in \HH(\mu)} q^{\|H\|} = \prod_{c\in \C(\mu)} \qint{\HT(c)},
\end{equation}
Theorem~\ref{thm:bijection} gives a bijective proof of Theorem~\ref{thm:2}.

By Theorem~\ref{thm:bijection}, $\D(2n)$, $\HH(2n)$, and $\M(2n)$ have the same
cardinality $(2n-1)!!=1\cdot3\cdots(2n-1)$. Therefore, we have
\[
|\D(2n)| = (2n-1)!!,
\]
which was also conjectured by Kenyon and Wilson (private communication with
David Wilson).

For $T\in\D(\lm)$, we define $\HT(T)=\HT(\mu)$.

\begin{cor}\label{cor:pq}
  We have
\[
\sum_{T\in \D(2n)} p^{\HT(T)-|T|} q^{|T|} 
= \sum_{\pi\in\M(2n)} p^{\nest(\pi)} q^{\cro(\pi)}.
\]
\end{cor}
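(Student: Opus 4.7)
The plan is to derive Corollary~\ref{cor:pq} as a direct consequence of Theorem~\ref{thm:bijection} together with Lemma~\ref{lem:cr}. First I would partition both sides according to the Dyck path $\mu$: on the left, write $\D(2n)=\bigsqcup_{\mu\in\Dyck(2n)}\D(*/\mu)$, and on the right, write $\M(2n)=\bigsqcup_{\mu\in\Dyck(2n)}\M(\mu)$. It then suffices to establish the equality term-by-term for each fixed $\mu$.

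For fixed $\mu$, apply the bijection $\Phi=\zeta^{-1}\circ\psi:\D(*/\mu)\to\M(\mu)$ from Theorem~\ref{thm:bijection}. Given $T\in\D(\lambda/\mu)\subseteq\D(*/\mu)$, set $\pi=\Phi(T)$. By Theorem~\ref{thm:bijection}, $|T|=\cro(\pi)$. By the definition of $\HT(T)$ and Lemma~\ref{lem:cr} applied to $\zeta(\pi)=\psi(T)=(\mu,H)$, we have $\HT(T)=\HT(\mu)=\cro(\pi)+\nest(\pi)$. Subtracting gives $\HT(T)-|T|=\nest(\pi)$, so the monomial weight transforms as
\[
p^{\HT(T)-|T|}q^{|T|}=p^{\nest(\pi)}q^{\cro(\pi)}.
\]

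Summing this identity over all $T\in\D(*/\mu)$ (equivalently, over all $\pi\in\M(\mu)$) and then over all $\mu\in\Dyck(2n)$ yields Corollary~\ref{cor:pq}. There is essentially no obstacle here, since all the content is already packaged in Theorem~\ref{thm:bijection} and Lemma~\ref{lem:cr}; the only thing to verify carefully is the weight translation $\HT(T)-|T|=\nest(\pi)$, which is immediate from the two identities $|T|=\cro(\pi)$ and $\HT(\mu)=\cro(\pi)+\nest(\pi)$.
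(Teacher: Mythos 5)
Your proposal is correct and follows exactly the paper's own argument: decompose both sides over $\mu\in\Dyck(2n)$, apply the bijection $\zeta^{-1}\circ\psi$ from Theorem~\ref{thm:bijection} together with Lemma~\ref{lem:cr}, and translate the weight via $|T|=\cro(\pi)$ and $\HT(\mu)=\cro(\pi)+\nest(\pi)$. No issues.
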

\begin{proof}
By Lemma~\ref{lem:cr} and Theorem~\ref{thm:bijection}, we have
  \begin{align*}
\sum_{T\in \D(2n)} p^{\HT(T)-|T|} q^{|T|} 
 &= \sum_{\mu\in\Dyck(2n)} \sum_{T\in\D(*/\mu)} p^{\HT(\mu)-|T|} q^{|T|}  \\
 &= \sum_{\mu\in\Dyck(2n)} \sum_{\pi\in\M(\mu)} p^{\nest(\pi)} q^{\cro(\pi)}  \\
 &= \sum_{\pi\in\M(2n)} p^{\nest(\pi)} q^{\cro(\pi)}.
  \end{align*}
\end{proof}

It is known that the two statistics $\cro$ and $\nest$ have joint symmetric
distribution over matchings, see \cite[Corollary~1.4]{Klazar2006} or
\cite[(1.7)]{Kasraoui2006}. In other words,
\[
\sum_{\pi\in\M(2n)} p^{\nest(\pi)} q^{\cro(\pi)} = \sum_{\pi\in\M(2n)} p^{\cro(\pi)} q^{\nest(\pi)}.
\]
Thus, by Corollary~\ref{cor:pq} we get the following non-trivial identity:
\[
\sum_{T\in \D(2n)} p^{|T|} q^{\HT(T)-|T|}= \sum_{T\in \D(2n)} p^{\HT(T)-|T|} q^{|T|}.
\]

Let $D_n(p,q)$ be the sum in Corollary~\ref{cor:pq}:
\[
D_n(p,q) = \sum_{T\in \D(2n)} p^{\HT(T)-|T|} q^{|T|}.
\]
By Flajolet's theory on continued fractions \cite{Flajolet1982}, we have
\[
\sum_{n\ge0} D_n(p,q) x^n =
\cfrac{1}{
  1 - \cfrac{\pqint 1 x}{
    1 - \cfrac{\pqint2 x}{
      1 - \cdots
    }
  }
},
\]
where $\pqint n = p^{n-1} + p^{n-2}q + \cdots + pq^{n-2} + q^{n-1}$. By
Viennot's theory \cite{ViennotOPF, ViennotOP}, $D_n(p,q)$ is equal to the $2n$th
moment of the orthogonal polynomial $H_n(x;p,q)$ defined by $H_{-1}(x;p,q)=0$,
$H_{0}(x;p,q)=1$, and the three term recurrence
\[
H_{n+1}(x;p,q) = x H_n(x;p,q) - \pqint{n} H_{n-1}(x;p,q).
\]
In particular, $H_n(x;1,q)$ is the continuous $q$-Hermite polynomial and
$H_n(x;q,q^2)$ is the discrete $q$-Hermite polynomial, see \cite{IsmailStanton,
  SimionStanton}. There are known formulas for the $2n$th moments of
$H_n(x;1,q)$ and $H_n(x;q,q^2)$.  For the $2n$th moment of $H_n(x;1,q)$, we
have the Touchard-Riordan formula which has various proofs, see
\cite{Cigler2011, JVKim, JVR, Penaud1995, Prodinger, Riordan1975, Touchard1952}:
\begin{equation}
 \label{eq:22}
\sum_{\pi\in \M(2n)} q^{\cro(\pi)} = \frac{1}{(1-q)^n} \sum_{k=0}^n 
\left(\binom{2n}{n-k}-\binom{2n}{n-k-1}\right)(-1)^k q^{\binom{k+1}2}.  
\end{equation}
For the $2n$th moment of $H_n(x;q,q^2)$, we have the following formula, see \cite[Proof
of Corollary 2]{IsmailStanton} or \cite[(5.4)]{SimionStanton}:
\begin{equation}
 \label{eq:23}
\sum_{\pi\in \M(2n)} q^{2\cro(\pi)+\nest(\pi)} = \qint{2n-1}!!,
\end{equation}
where $\qint{2n-1}!!=\qint 1 \qint 3 \cdots \qint{2n-1}$.

By Corollary~\ref{cor:pq}, \eqref{eq:22} and \eqref{eq:23} we obtain the
following corollary.
\begin{cor}\label{cor:q1}
We have
\[
\sum_{T\in\D(2n)} q^{|T|} = \frac{1}{(1-q)^n} \sum_{k=0}^n 
\left(\binom{2n}{n-k}-\binom{2n}{n-k-1}\right)(-1)^k q^{\binom{k+1}2},
\]
\[
\sum_{T\in\D(2n)} q^{\HT(T)+|T|} = \qint{2n-1}!!.
\]
\end{cor}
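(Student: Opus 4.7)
The plan is to derive both identities in Corollary~\ref{cor:q1} by suitable specializations of the two variables $p,q$ in Corollary~\ref{cor:pq}, combined with the two moment formulas~\eqref{eq:22} and~\eqref{eq:23} quoted from the literature. The corollary is essentially a transcription: Corollary~\ref{cor:pq} converts a weighted sum over Dyck tilings into a weighted sum over matchings by the statistics $|T|\leftrightarrow\cro(\pi)$ and $\HT(T)-|T|\leftrightarrow\nest(\pi)$, and our job is to match the desired monomial on the Dyck tiling side with the monomial appearing in the known moment formula on the matching side.

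For the first identity, I would set $p=1$ in Corollary~\ref{cor:pq}, which gives
\[
\sum_{T\in\D(2n)} q^{|T|} = \sum_{\pi\in\M(2n)} q^{\cro(\pi)},
\]
and then invoke the Touchard--Riordan formula~\eqref{eq:22} to obtain the stated closed form. No further work is needed.

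For the second identity, the key observation is that any monomial of the form $p^{\HT(T)-|T|}q^{|T|}$ specializes under $p=q^\alpha,\ q\mapsto q^\beta$ to $q^{\alpha \HT(T)+(\beta-\alpha)|T|}$, so to produce the exponent $\HT(T)+|T|$ one should choose $\alpha=1$ and $\beta=2$. Substituting $p=q$ and $q\mapsto q^2$ in Corollary~\ref{cor:pq} therefore yields
\[
\sum_{T\in\D(2n)} q^{\HT(T)+|T|}
= \sum_{\pi\in\M(2n)} q^{\nest(\pi)+2\cro(\pi)},
\]
and the right-hand side equals $\qint{2n-1}!!$ by the moment formula~\eqref{eq:23} for the discrete $q$-Hermite polynomials.

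There is no real obstacle here: once Corollary~\ref{cor:pq} is in hand, both formulas are one-line consequences, and the only mild subtlety is noticing the correct $(p,q)\mapsto(q,q^2)$ specialization so that the joint statistic $(\nest,\cro)$ is combined in the exact way needed to invoke~\eqref{eq:23}. I would simply present the two specializations in a short proof, cite~\eqref{eq:22} and~\eqref{eq:23}, and conclude.
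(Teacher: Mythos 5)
Your proposal is correct and is exactly the paper's (implicit) argument: the paper deduces Corollary~\ref{cor:q1} directly from Corollary~\ref{cor:pq} together with \eqref{eq:22} and \eqref{eq:23}, via precisely the specializations $p=1$ and $(p,q)\mapsto(q,q^2)$ that you identify. Nothing is missing.
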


\section{Final remarks}
\label{sec:final-remarks}

We can generalize the matrix $M$ in the introduction as follows.  The matrix
$M(p,q)$ is defined by
\[
M(p,q)_{\lambda,\mu} = 
\left\{ 
  \begin{array}{ll}
    p^{|\lm|} q^{d(\lambda,\mu)}, & \mbox{if $\lambda\succ\mu$;}\\
0, & \mbox{otherwise,}
 \end{array}
\right.
\]
where $d(\lambda,\mu)$ is the number of reversed matching pairs when going from
$\mu$ to $\lambda$.  Then $M=M(1,1)$.  Recall that Kenyon and Wilson
\cite[Theorem~1.5]{Kenyon2011} proved that
\[
  M^{-1}_{\lambda,\mu}  = (-1)^{|\lm|} \times |\D(\lm)|.  
\]
It is not hard to see that the proof of the above identity in \cite{Kenyon2011}
also implies the following identity, which was first observed by Matja\v z
Konvalinka (personal communication with Matja\v z Konvalinka):
\begin{equation}
  \label{eq:Mpq}
M(p,q)^{-1}_{\lambda,\mu} = \sum_{T\in\D(\lm)} (-p)^{|\lm|} q^{|T|}.  
\end{equation}

Note that Theorem~\ref{thm:1} (resp.~Theorem~\ref{thm:2}) is a formula for the
sum of the absolute values of the entries in a row of $M(q^{1/2},q^{1/2})$
(resp.~a column of $M(1,q)$). Such a sum using $M(p,q)$ does not factor nicely,
so it seems more difficult to find a formula for the sum.

In Section~\ref{sec:bijection} we have found a bijection
$\psi:\D(*/\mu)\to\HH(\mu)$ which gives a bijective proof of
Theorem~\ref{thm:2}. A bijective proof of Theorem~\ref{thm:1} is given
in \cite{KMPW}.

Finally we note that, although it is not directly related to this paper, Dyck
tiles are also used in \cite{Brenti2002} as a combinatorial tool for
Kazhdan-Lusztig polynomials.

\section*{Acknowledgement}
I would like to thank Dennis Stanton for drawing my attention to Kenyon and
Wilson's conjectures, and for helpful discussion.  I also thank Matja\v z
Konvalinka, David Wilson, and Victor Reiner for helpful comments, and Ole
Warnaar for providing me the proof of Proposition~\ref{prop:identity} which has
replaced the previous proof.

% \bibliographystyle{abbrv}
% \bibliography{/Users/jskim/bongs/math/research/bib/ref}

\begin{thebibliography}{10}

\bibitem{Brenti2002}
F.~Brenti.
\newblock Kazhdan-{L}usztig and {$R$}-polynomials, {Y}oung's lattice, and
  {D}yck partitions.
\newblock {\em Pacific J. Math.}, 207(2):257--286, 2002.

\bibitem{Cigler2011}
J.~Cigler and J.~Zeng.
\newblock A curious {$q$}-analogue of {H}ermite polynomials.
\newblock {\em J. Combin. Theory Ser. A}, 118(1):9--26, 2011.

\bibitem{Flajolet1982}
P.~Flajolet.
\newblock On congruences and continued fractions for some classical
  combinatorial quantities.
\newblock {\em Discrete Math.}, 41(2):145--153, 1982.

\bibitem{Gasper2004}
G.~Gasper and M.~Rahman.
\newblock {\em Basic hypergeometric series}, volume~96 of {\em Encyclopedia of
  Mathematics and its Applications}.
\newblock Cambridge University Press, Cambridge, second edition, 2004.
\newblock With a foreword by Richard Askey.

\bibitem{IsmailStanton}
M.~E.~H. Ismail and D.~Stanton.
\newblock More orthogonal polynomials as moments.
\newblock In {\em Mathematical essays in honor of {G}ian-{C}arlo {R}ota
  ({C}ambridge, {MA}, 1996)}, volume 161 of {\em Progr. Math.}, pages 377--396.
  Birkh\"auser Boston, Boston, MA, 1998.

\bibitem{JVKim}
M.~Josuat-Verg\`es and J.~S. Kim.
\newblock Touchard-{R}iordan formulas, {T}-fractions, and {J}acobi's triple
  product identity. 
\newblock {\em to appear in Ramanujan Journal.}
\newblock \url{http://arxiv.org/abs/1101.5608}.

\bibitem{JVR}
M.~Josuat-Verg\`es and M.~Rubey.
\newblock Crossings, {M}otzkin paths and moments.
\newblock {\em Discrete Math.}, 311:2064--2078, 2011.

\bibitem{Kasraoui2006}
A.~Kasraoui and J.~Zeng.
\newblock Distribution of crossings, nestings and alignments of two edges in
  matchings and partitions.
\newblock {\em Electron. J. Combin.}, 13(1):\# R33, 2006.

\bibitem{Kenyon2011a}
R.~W. Kenyon and D.~B. Wilson.
\newblock Boundary partitions in trees and dimers.
\newblock {\em Trans. Amer. Math. Soc.}, 363:1325--1364, 2011.

\bibitem{Kenyon2011}
R.~W. Kenyon and D.~B. Wilson.
\newblock Double-dimer pairings and skew {Y}oung diagrams.
\newblock {\em Electron. J. Combin.}, 18:\#P130, 2011.

\bibitem{KMPW}
J.~S. Kim, K.~M\'esz\'aros, G.~Panova, and D.~B. Wilson.
\newblock Dyck tilings, linear extensions, descents, and inversions.
\newblock \url{http://arxiv.org/abs/1205.6578}

\bibitem{Klazar2006}
M.~Klazar.
\newblock On identities concerning the numbers of crossings and nestings of two
  edges in matchings.
\newblock {\em SIAM J. Discrete Math.}, 20(4):960--976 (electronic), 2006.

\bibitem{Penaud1995}
J.-G. Penaud.
\newblock Une preuve bijective d'une formule de {T}ouchard-{R}iordan.
\newblock {\em Discrete Math.}, 139:347--360, 1995.

\bibitem{Prodinger}
H.~Prodinger.
\newblock On {T}ouchard's continued fraction and extensions:
  combinatorics-free, self-contained proofs.
\newblock \url{http://arxiv.org/abs/1102.5186}.

\bibitem{Riordan1975}
J.~Riordan.
\newblock The distribution of crossings of chords joining pairs of {$2n$}
  points on a circle.
\newblock {\em Math. Comp.}, 29:215--222, 1975.
\newblock Collection of articles dedicated to Derrick Henry Lehmer on the
  occasion of his seventieth birthday.

\bibitem{SimionStanton}
R.~Simion and D.~Stanton.
\newblock Octabasic {L}aguerre polynomials and permutation statistics.
\newblock {\em J. Comput. Appl. Math.}, 68(1-2):297--329, 1996.

\bibitem{EC1}
R.~P. Stanley.
\newblock {\em Enumerative Combinatorics. {V}ol. 1, second ed.}
\newblock Cambridge University Press, New York/Cambridge, 2011.

\bibitem{Touchard1952}
J.~Touchard.
\newblock Sur un probl\`eme de configurations et sur les fractions continues.
\newblock {\em Canadian J. Math.}, 4:2--25, 1952.

\bibitem{ViennotOPF}
G.~Viennot.
\newblock Une th\'eorie combinatoire des polyn\^omes orthogonaux.
\newblock Lecture Notes, UQAM, 1983.

\bibitem{ViennotOP}
G.~Viennot.
\newblock A combinatorial theory for general orthogonal polynomials with
  extensions and applications.
\newblock In {\em Orthogonal polynomials and applications ({B}ar-le-{D}uc,
  1984)}, volume 1171 of {\em Lecture Notes in Math.}, pages 139--157.
  Springer, Berlin, 1985.

\end{thebibliography}

\end{document}